\newcommand{\QQ}{\mathbf{Q}}
\newcommand{\ZZ}{\mathbf{Z}}
\DeclareMathOperator{\Fil}{Fil}
\DeclareMathOperator{\Spec}{Spec}
\newcommand{\dR}{\mathrm{dR}}
\newcommand{\cris}{\mathrm{cr}}
\newcommand{\st}{\mathrm{st}}
\newcommand{\DD}{\mathbf{D}}
\newcommand{\BB}{\mathbf{B}}
\newcommand{\Qp}{\QQ_p}
\newcommand{\rgam}{\mathbb{R}\Gamma}
\newcommand{\syn}{\mathrm{syn}}
\newcommand{\et}{\text{\upshape \'et}} % awful hack to get roman letters in formulae within theorem statements
\DeclareMathOperator{\tr}{tr}
\newcommand{\into}{\hookrightarrow}
\newtheorem{theorem}{Theorem}[section]
\newtheorem{proposition}[theorem]{Proposition}
\newtheorem{definition}[theorem]{Definition}
\newtheorem{assumption}[theorem]{Assumption}
\theoremstyle{remark}
\newtheorem{remark}[theorem]{Remark}
\newcommand{\Ne}{Nekov\'a\v{r}\xspace}
\newcommand{\Ni}{Nizio\l\xspace}
\title{Finite polynomial cohomology for general varieties}
\author{Amnon Besser}
\address{Department of Mathematics, Ben Gurion University, Be'er Sheva 84105, Israel}
\email{bessera@math.bgu.ac.il}
\author{David Loeffler}
\address[Loeffler]{Mathematics Institute\\
Zeeman Building, University of Warwick\\
Coventry CV4 7AL, UK}
\email{d.a.loeffler@warwick.ac.uk}
\author{Sarah Livia Zerbes}
\address[Zerbes]{Department of Mathematics \\
University College London\\
Gower Street, London WC1E 6BT, UK}
\email{s.zerbes@ucl.ac.uk}
\thanks{The authors' research was supported by the following grants: Royal Society University Research Fellowship (Loeffler); EPSRC First Grant EP/J018716/1 (Zerbes).}
\begin{document}

\begin{abstract}
 \Ne and \Ni have introduced in \cite{NN} a version of syntomic cohomology valid for arbitrary varieties over $p$-adic fields. This uses a mapping cone construction similar to the rigid syntomic cohomology of \cite{Bes98a} in the good-reduction case, but with Hyodo--Kato (log-crystalline) cohomology in place of rigid cohomology.

 In this short note, we describe a cohomology theory which is a modification of the theory of \Ne--\Ni, modified by replacing $1 - \varphi$ with other polynomials in $\varphi$. This is the analogue for bad-reduction varieties of the finite-polynomial cohomology of \cite{Bes97}; and we use this cohomology theory to give formulae for $p$-adic regulator maps, extending the results of \cite{Bes97, Bes98b, Bes10} to varieties over $p$-adic fields, without assuming any good reduction hypotheses.
\end{abstract}

\maketitle

 \section{Preliminaries from $p$-adic Hodge theory}

  \subsection{Filtered $(\varphi, N)$-modules and their cohomology}

   We recall some standard constructions for $(\phi, N)$-modules, following \S 2.4 of \cite{NN}.

   Let $K$ be a $p$-adic field (i.e.~the field of fractions of a complete DVR $V$ of mixed characteristic $(0, p)$, with perfect residue field $k$).

   We define a \emph{filtered $(\varphi, N, G_K)$-module} over $K$ to be the data of a finite-dimensional $K_0^{\mathrm{nr}}$-vector space $D$, where $K_0$ is the maximal unramified subfield of $K$ and $K_0^{\mathrm{nr}}$ its maximal unramified extension, equipped with the following structures: a $K_0^{\mathrm{nr}}$-semilinear, bijective Frobenius $\varphi$; a $K_0^{\mathrm{nr}}$-linear monodromy operator $N$ satisfying $N\varphi = p\varphi N$; a $K_0^{\mathrm{nr}}$-semilinear action of $G_K$; and a decreasing filtration of the $K$-vector space $D_K = (D \otimes_{K_0^{\mathrm{nr}}} \overline{K})^{G_K}$ by $K$-vector subspaces $\Fil^i D_K$. Such objects form an abelian category in the obvious way (with morphisms required to be strictly compatible with the filtration on $D_K$).

   \begin{theorem}[Colmez--Fontaine] The subcategory of ``weakly admissible'' filtered $(\varphi, N)$-modules is equivalent to the category of semistable $\Qp$-linear representations of $G_K$, via the functor $V \mapsto \DD_{\mathrm{pst}}(V) = \bigcup_{M} (V \otimes \BB_\st)^{G_M}$, where the direct limit is taken over finite extensions $M / K$.
   \end{theorem}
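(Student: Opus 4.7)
The plan is to exhibit an explicit quasi-inverse to $\DD_{\mathrm{pst}}$ and verify it gives an equivalence. For a weakly admissible filtered $(\varphi, N, G_K)$-module $D$, one sets
\[
V_\st(D) \;=\; \Fil^0\bigl(D \otimes_{K_0^{\mathrm{nr}}} \BB_\st\bigr)^{\varphi = 1,\, N = 0},
\]
with $G_K$ acting diagonally and the filtration obtained via a chosen embedding $\BB_\st \hookrightarrow \BB_{\dR}$ (i.e.\ via a choice of $p$-adic logarithm). There are then three things to check: (a) $\DD_{\mathrm{pst}}$ lands in the weakly admissible subcategory; (b) the composition $V_\st \circ \DD_{\mathrm{pst}}$ is naturally isomorphic to the identity on semistable representations; and (c) $V_\st(D)$ is a semistable representation of dimension $\dim_{K_0^{\mathrm{nr}}} D$ for every weakly admissible $D$.

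First I would dispose of (a), which is essentially formal. The dimension equality $\dim_{\Qp} V = \dim_{K_0^{\mathrm{nr}}} \DD_{\mathrm{pst}}(V)$ is built into the definition of semistability via the $(\varphi, N)$-regularity of $\BB_\st$. The inequality $t_H(D') \le t_N(D')$ for sub-objects $D' \subset \DD_{\mathrm{pst}}(V)$, with equality on $D$ itself, is then reduced to the one-dimensional (determinant) case by passing to top exterior powers, where it becomes an explicit calculation on $\BB_\st^{\varphi = p^h}$. Step (b) follows from the ``fundamental exact sequence''
\[
0 \to \Qp \to \BB_\st^{\varphi = 1,\, N = 0} \to \BB_{\dR}/\BB_{\dR}^+ \to 0,
\]
which forces the natural comparison map $V \to V_\st(\DD_{\mathrm{pst}}(V))$ to be an isomorphism for any semistable $V$.

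Step (c) is essential surjectivity and is the main obstacle, accounting for the bulk of Colmez--Fontaine's argument. The strategy is induction on $\dim D$. If $D$ admits no non-trivial weakly admissible sub-object, one uses the Dieudonn\'e--Manin decomposition of the underlying isocrystal over $\overline{K_0^{\mathrm{nr}}}$ to construct enough elements of $V_\st(D)$ directly. Otherwise one chooses a non-trivial weakly admissible $0 \to D' \to D \to D'' \to 0$, applies the inductive hypothesis to $D'$ and $D''$, and assembles $V_\st(D)$ from the long exact sequence attached to the fundamental exact sequence above. The genuinely delicate point is controlling the connecting map so that the resulting $V_\st(D)$ achieves the full expected dimension; this is precisely the content of Colmez's ``fundamental lemma'' on weakly admissible versus admissible filtrations of a fixed isocrystal. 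A final Galois descent from $K_0^{\mathrm{nr}}$ back to $K$ restores the $G_K$-structure and completes the verification that $V_\st(D)$ is semistable with $\DD_{\mathrm{pst}}(V_\st(D)) \cong D$.
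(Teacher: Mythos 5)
The paper does not prove this statement; it records the Colmez--Fontaine theorem as background, without argument, as part of its review of \S 2.4 of \cite{NN}. So there is no ``paper's proof'' to compare against, and your sketch must be judged on its own.

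As a summary of the standard Colmez--Fontaine argument your outline is broadly sound: the quasi-inverse $V_\st$, the reduction of weak admissibility to the rank-one case via exterior powers, the fundamental exact sequence giving full faithfulness, and the inductive essential-surjectivity argument resting on the Dieudonn\'e--Manin classification and Colmez's fundamental lemma are all in the right place. Two cautions, though. First, what you sketch is really the $\DD_\st$ version of the theorem (filtered $(\varphi,N)$-modules over $K_0$ versus semistable representations); the statement as written in the paper is the $\DD_{\mathrm{pst}}$ version, with a semilinear $G_K$-action on a $K_0^{\mathrm{nr}}$-space, and the passage between them is not a cosmetic ``final Galois descent'': one must know that the $G_K$-action on $\DD_{\mathrm{pst}}(V)$ has open kernel, choose a finite Galois $M/K$ killing it, apply the $\DD_\st$ theorem over $M$, and then descend both the $(\varphi,N)$-module and the filtration using Hilbert~90 and the $G_{M/K}$-equivariance of every map in sight. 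That needs to be said explicitly, since the equivalence being asserted is between $G_K$-categories on both sides. Second, the statement in the paper pairs $\DD_{\mathrm{pst}}$ with ``semistable'' rather than ``potentially semistable'' representations; this is a slight imprecision of the source that your sketch silently corrects, which is fine, but worth flagging so the reader knows which category you are actually proving the equivalence for.
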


   Let $D$ be a filtered $(\varphi, N, G_K)$-module. We write $D_{\st} = D^{G_K}$, and define a complex $C_{\mathrm{st}}(D)$ by
   \[ D_{\st} \oplus \Fil^0 D_K \rTo D_{\st} \oplus D_{\st} \oplus D_K \rTo D_{\st},\]
   with morphisms given by $(u, v) \mapsto \left( (1 - \varphi)u, Nu, u - v\right)$ and $(w, x, y) \mapsto Nw - (1 - p\varphi) x$.

   \begin{theorem}[Bloch--Kato, \Ne]
    \label{thm:phinmodproperties}
    The complex $H^i_{\st}$ computes Ext groups in the category of filtered $(\varphi, N, G_K)$-modules; the subcategory of weakly admissible objects is closed under extensions, and the functor $\DD_{\st}$ induces functorial maps
    \[ H^i_{\st}(D) \to H^i(K, V). \]
    These maps are isomorphisms for $i = 0$ and injective for $i = 1$.
   \end{theorem}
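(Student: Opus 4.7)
The plan is to identify $C_{\st}(D)$ with a complex computing $\mathrm{Ext}^*_{\mathcal{MF}}(\mathbf{1}, D)$ in the abelian category $\mathcal{MF}$ of filtered $(\varphi, N, G_K)$-modules, where $\mathbf{1}$ is the tensor unit. The case of a general source $D'$ reduces to this by duality via $\mathrm{Ext}^i(D', D) = H^i(C_{\st}((D')^\vee \otimes D))$, using the internal $\mathrm{Hom}$ in $\mathcal{MF}$.

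For $H^0$ the kernel of the first differential is the set of pairs $(u,v) \in D_{\st} \oplus \Fil^0 D_K$ with $\varphi u = u$, $Nu = 0$, and $u = v$ inside $D_K$, which is exactly $\mathrm{Hom}_{\mathcal{MF}}(\mathbf{1}, D)$. For $H^1$, given an extension $0 \to D \to E \to \mathbf{1} \to 0$ in $\mathcal{MF}$, I would first split the underlying $K_0^{\mathrm{nr}}$-vector space extension $G_K$-equivariantly (possible by the additive Hilbert~90 for $K_0^{\mathrm{nr}}$) to obtain $u \in E_{\st}$ lifting $1$, and separately pick a lift $v \in \Fil^0 E_K$ of $1$ (using strictness of morphisms in $\mathcal{MF}$). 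Then $((1-\varphi)u,\, Nu,\, u-v)$ lies in the middle term of $C_{\st}(D)$, is killed by the second differential, and its class in $H^1$ is independent of the choices; reversing this construction (assembling an extension from a cocycle) shows the correspondence is bijective.

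The closure of weakly admissible objects under extensions is a standard consequence of Colmez--Fontaine: Newton and Hodge numbers are additive in short exact sequences, and for every sub-object $D_0$ of an extension $D$ of $D''$ by $D'$ the weak admissibility of $D'$ and $D''$ forces $t_N(D_0) \leq t_H(D_0)$ by comparing $D_0 \cap D'$ and the image of $D_0$ in $D''$. For the comparison maps $H^i_{\st}(D) \to H^i(K, V)$, I would invoke the Bloch--Kato ``fundamental exact sequence'' in its semistable form,
$$ 0 \to \Qp \to \BB_{\st}^{\varphi=1, N=0} \oplus \Fil^0 \BB_{\dR} \to \BB_{\st} \oplus \BB_{\st} \oplus \BB_{\dR} \to \BB_{\st} \to 0, $$
whose differentials are modelled on those of $C_{\st}$. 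Tensoring with $V$ and taking continuous $G_K$-cohomology produces a hypercohomology spectral sequence abutting to $H^*(K, V)$ whose edge maps supply the required morphisms.

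The $i = 0$ assertion is the tautological identity $V^{G_K} = (D_{\st})^{\varphi=1, N=0} \cap \Fil^0 D_K$. For injectivity at $i = 1$: by parts (1) and (2), a class in $H^1_{\st}(D)$ represents an extension of weakly admissible objects in $\mathcal{MF}$, and by the full-faithfulness half of Colmez--Fontaine such an extension is determined by its underlying Galois extension; hence if the image in $H^1(K, V)$ vanishes the original extension splits. The main obstacle, in my view, is the careful bookkeeping needed to show that the map built from the fundamental exact sequence agrees with the extension-theoretic map from part~(1) --- a matter of tracking signs and conventions through the splicing of the period-ring sequences and the induced spectral sequence.
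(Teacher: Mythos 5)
The paper does not actually prove this theorem: it is stated as a citation of Bloch--Kato and of \Ne, following \S 2.4 of \cite{NN}, and no argument is given. So there is no ``paper proof'' to compare against; I will evaluate your sketch on its own.

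Your overall strategy is the standard one and is sound in outline: identify $C_{\st}(D)$ with an $\operatorname{Ext}$-computing complex in the abelian category of filtered $(\varphi, N, G_K)$-modules by explicit cocycle manipulation; use additivity of $t_N$ and $t_H$ plus the sub-object inequality to see that weak admissibility is closed under extensions; obtain the maps to Galois cohomology and deduce $H^0$-isomorphism and $H^1$-injectivity from Colmez--Fontaine. The $H^0$ and $H^1$ computations and the injectivity argument via full faithfulness are correct (modulo the ``reassembly'' direction of $H^1 \leftrightarrow \operatorname{Ext}^1$, which you leave implicit but which is routine). Note, though, that the $i = 0$ identity $V^{G_K} = D_{\st}^{\varphi = 1, N = 0} \cap \Fil^0 D_K$ is not a tautology of definitions; it is an application of the Colmez--Fontaine equivalence (or of Bloch--Kato's original arguments).

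The one place I would push back is the construction of the comparison map via the four-term ``semistable fundamental exact sequence.'' The crystalline fundamental sequence $0 \to \Qp \to \BB_{\cris}^{\varphi = 1} \oplus \BB_{\dR}^+ \to \BB_{\dR} \to 0$ is standard; your four-term variant built on the pattern of $C_{\st}$ (involving $N$, $1 - \varphi$, and $1 - p\varphi$ on $\BB_{\st}$) is plausible but its exactness in the middle and top degrees is not obviously ``standard'' and you do not indicate a reference or a proof. Moreover, you then have two \emph{a priori} different constructions of the maps $H^i_{\st}(D) \to H^i(K,V)$ --- one from the hypercohomology spectral sequence of this period-ring complex, one from the extension/Colmez--Fontaine route --- and you candidly flag the reconciliation as the main obstacle. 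I would agree, and would in fact suggest you discard the period-ring route entirely: once you have the $\operatorname{Ext}$ identification (part 1) and closure of weak admissibility under extensions (part 2), the map $H^1_{\st}(D) \to H^1(K,V)$ \emph{is} the map $\operatorname{Ext}^1_{\mathcal{MF}^{\mathrm{wa}}}(\mathbf{1}, D) \to \operatorname{Ext}^1_{\operatorname{Rep}_{G_K}}(\Qp, V)$ induced by the Colmez--Fontaine functor, functoriality and the degree-$0$, degree-$1$ assertions follow directly, and nothing about period-ring exact sequences needs to be checked. Higher degrees, where $C_{\st}$ still has content in degree $2$, are then handled by a separate connecting-map argument (as in \Ne's treatment), but the theorem as stated requires nothing beyond $i \le 1$.
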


  \subsection{Variants}

   We now construct a variant of the complex $C_{\st}(D)$ in which the semilinear Frobenius is replaced by a ``partially linearized'' one, and $1 - \varphi$ by a more general polynomial.

   We choose a finite extension $L / \Qp$ contained in $K$, and we write $f = [L_0 : \Qp]$ and $q = p^f$. We can then define $D_{\st, L} = D_{\st} \otimes_{L_0} L$, which we equip with an $L$-linear operator $\Phi$ given by extending scalars from the $L_0$-linear operator $\varphi^f$ on $D_{\st}$.

   Let $P \in 1 + T L[T]$ be a polynomial. We write $C_{\st, L, P}(D)$ for the complex of $L$-vector spaces
   \[ D_{\st, L} \oplus \Fil^0 D_K \rTo D_{\st, L} \oplus D_{\st, L} \oplus D_K \rTo D_{\st, L},\]
   with the maps given by $u,v \mapsto (P(\Phi) u, Nu, u - v)$ and $(w, x, y) \mapsto Nw - P(q\Phi)x$. Thus $C_{\st}(D) = C_{\st, \Qp, P_0}(D)$ where $P_0(T) = 1 - T$. We write $H^i_{\st, L, P}(D)$ for the cohomology of $C_{\st, L, P}(D)$.

   \begin{remark}
    In applications of the theory, we are almost always interested in the case when $K$ is finite over $\Qp$ and $L = K$; but in order to set up the theory we need a base-change compatibility which seems to be easier to prove for varying $K$ but fixed $L$, which is why we have set up the theory for $L \ne K$.
   \end{remark}

  \subsection{Cup products}
   \label{sect:cupprod1}

   \begin{definition}[{cf.~\cite[Definition 4.1]{Bes97}}]
    If $P, Q \in 1 + T L[T]$, then we define $P \star Q \in 1 + T L[T]$ as the polynomial with roots $\{ \alpha_i \beta_j\}$, where $\{\alpha_i\}$ and $\{\beta_j\}$ are the roots of $P$ and $Q$ respectively.
   \end{definition}

   \begin{proposition}
    There are cup-products $C^\bullet_{\st, L, P_1}(D_1) \times C^\bullet_{\st, L, P_2}(D_2) \to C^\bullet_{\st, L, P_1 \star P_2}(D_1 \otimes D_2)$, associative and graded-commutative up to homotopy.
   \end{proposition}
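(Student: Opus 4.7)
The plan is to realise $C_{\st, L, P}(D)$ as an iterated mapping fibre of simpler complexes and to construct the cup product piece by piece. Concretely, let $A^\bullet_{L, P}(D)$ denote the three-term complex
\[ D_{\st, L} \xrightarrow{\ (P(\Phi),\, N)\ } D_{\st, L} \oplus D_{\st, L} \xrightarrow{\ (w, x) \mapsto Nw - P(q\Phi) x\ } D_{\st, L}, \]
and observe that $C_{\st, L, P}(D)$ is canonically isomorphic to the mapping fibre of the map $A^\bullet_{L, P}(D) \oplus \Fil^0 D_K \to D_K$ sending $(a, v) \mapsto a - v$ in degree $0$, where $\Fil^0 D_K$ and $D_K$ are placed in degree $0$. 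Since tensor products commute with mapping fibres up to a canonical shuffle, constructing the cup product reduces to exhibiting compatible pairings on the three pieces $A^\bullet_{L, P}$, $\Fil^0 D_K$, and $D_K$.

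The pairings on $\Fil^0 D_K$ and $D_K$ are the obvious multiplications, and their compatibility with the inclusion $\Fil^0 \hookrightarrow D_K$ is automatic from the fact that the filtration is multiplicative. The heart of the matter is the pairing on $A^\bullet_{L, P}$, for which I would follow the strategy of \cite[\S 4]{Bes97}. The key algebraic input is the following: since the roots of $P \star Q$ are the products $\alpha_i \beta_j$ of roots of $P$ and $Q$, the polynomial $(P \star Q)(ST)$ vanishes in the finite $L$-algebra $L[S, T] / (P(S), Q(T))$, so that there exist $A, B \in L[S, T]$ with
\[ (P \star Q)(ST) = A(S, T)\, P(S) + B(S, T)\, Q(T). \]
Substituting $S = \Phi_1 \otimes 1$ and $T = 1 \otimes \Phi_2$ on $(D_1 \otimes D_2)_{\st, L}$ (which commute and satisfy $ST = \Phi$) produces operators $\widetilde A, \widetilde B$ with
\[ (P_1 \star P_2)(\Phi) = \widetilde A \cdot (P_1(\Phi_1) \otimes 1) + \widetilde B \cdot (1 \otimes P_2(\Phi_2)), \]
and an analogous identity for $(P_1 \star P_2)(q\Phi)$.

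Using these operators together with the commutation relation $N P(\Phi) = P(q\Phi) N$, which follows by iterating $N\varphi = p \varphi N$ a total of $f$ times, and the Leibniz rule $N_{D_1 \otimes D_2} = N_1 \otimes 1 + 1 \otimes N_2$, I would then write down an explicit chain map from the tensor product complex $A^\bullet_{L, P_1}(D_1) \otimes A^\bullet_{L, P_2}(D_2)$ to $A^\bullet_{L, P_1 \star P_2}(D_1 \otimes D_2)$: it is the identity in degree $0$, vanishes in degrees $\geq 3$, and in degrees $1$ and $2$ is built out of $\widetilde A$, $\widetilde B$, $N_1$, $N_2$ in the same pattern as Definition~4.1 of \cite{Bes97}, with the extra terms forced by the $N$-components.

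The main obstacle is the bookkeeping required to verify that this formula really is a chain map; this amounts to repeated application of the displayed algebraic identities together with the commutation $\Phi N = q^{-1} N \Phi$, and is essentially routine but somewhat tedious. Graded-commutativity and associativity up to homotopy then follow formally, exactly as in \cite[\S 4]{Bes97}: the polynomials $A, B$ are not unique, and different choices are readily seen to give chain-homotopic maps, so one may choose them symmetrically to obtain graded-commutativity, while associativity is checked on triple tensor products by factoring $(P_1 \star P_2) \star P_3 = P_1 \star P_2 \star P_3$ in two ways. The extension from the good-reduction Frobenius-only setting of \cite{Bes97} to the present setting with monodromy is the only genuinely new ingredient.
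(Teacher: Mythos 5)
Your approach is essentially the same as the paper's: both reduce the construction to choosing a Bezout-type identity $(P_1 \star P_2)(T_1 T_2) = a(T_1,T_2)P_1(T_1) + b(T_1,T_2)P_2(T_2)$ and then writing explicit cup-product formulas degree by degree, and your justification of why such $a,b$ exist (vanishing of $(P_1 \star P_2)(ST)$ in $L[S,T]/(P_1(S),P_2(T))$) is a useful detail that the paper elides. One imprecision to flag: tensor products do not commute with mapping fibres \emph{canonically} --- the cup product on a mapping fibre requires a choice of splitting, which is exactly the scalar $\lambda \in K$ appearing in the paper's explicit table --- so your ``different choices give chain-homotopic maps'' discussion needs to cover $\lambda$ as well as $a$ and $b$.
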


   \begin{proof}
    This is a straightforward exercise in homological algebra. Let $\lambda \in K$ and choose polynomials $a(T_1, T_2)$ and $b(T_1, T_2)$ such that $a(T_1, T_2) P_1(T_1) + b(T_1, T_2) P_2(T_2) = (P_1 \star P_2)(T_1 T_2)$. Then the cup-products in the various degrees are given by the following table:

    \begin{tabular}{|c|c|c|c|}
     \hline
      & $(u', v')$ & $(w', x', y')$ & $z'$ \\
     \hline
      $(u, v)$ & $(u \otimes u', v \otimes v')$ & $\left(\begin{array}{c} b(\Phi_1, \Phi_2)(u \otimes w'),\\ u \otimes x', \\ (\lambda u + (1 - \lambda) v) \otimes y' \end{array}\right)$ & $b(\Phi_1, q\Phi_2)(u \otimes z')$ \\ \hline
      $(w, x, y)$ & $\left(\begin{array}{c} a(\Phi_1, \Phi_2)(w \otimes u'),\\ x \otimes u', \\ y \otimes ((1 - \lambda) u' + \lambda v') \end{array}\right)$ & $-a(\Phi_1, q\Phi_2)(w \otimes x') + b(q\Phi_1, \Phi_2)(x \otimes w')$ & 0 \\ \hline
      $z$ & $a(q\Phi_1, \Phi_2)(z \otimes u')$ & 0 & 0 \\ \hline
    \end{tabular}

    One verifies easily that changing the value of $\lambda$, or the polynomials $a$ and $b$, changes the product by a chain homotopy.
   \end{proof}

  \subsection{Convenient modules}
   \label{sect:convenient}

   \begin{definition}
    Let $D$ be a filtered $(\varphi, N, G_K)$-module. We say $D$ is \emph{convenient} (for some choice of $L$ and $P$) if $N = 0$ and $P(\Phi)$ and $P(q\Phi)$ are bijective as endomorphisms of $D_{\st, L}$.
   \end{definition}

   Note that $D$ is convenient if and only if $D^*(1)$ is convenient. If $D$ is convenient, then the inclusion $D_K \into C^1_{\st}(D)$ induces an isomorphism
   \begin{equation}
    \label{eq:convenient}
    \frac{D_K}{\Fil^0 D_K} \cong H^1_{\st}(D).
   \end{equation}

   The inverse of this isomorphism is given by
   \begin{equation}\label{difference}
    (w, x, y) \mapsto y - \iota(P(\Phi)^{-1}w) \bmod \Fil^0,
   \end{equation}
   where $\iota$ is the natural inclusion $D_{\st, L} \into D_K$; note that we must have $Nw = P(q\Phi)x$ since $(w, x, y)$ is a cocycle, but by assumption $N = 0$ and $P(q\Phi)$ is bijective, so in fact we have $x = 0$. Note that this map commutes with change of $P$ and of $L$ (where defined).

   The following special case will be of importance below:

   \begin{proposition}
    \label{prop:Qp1}
    If $P(1) \ne 0$ and $P(q^{-1}) \ne 0$, then there is an isomorphism
    \[ H^1_{\st}(\Qp(1)) \cong K.\]
   \end{proposition}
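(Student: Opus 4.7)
The plan is to apply the convenient-module isomorphism \eqref{eq:convenient} to $D := \DD_{\mathrm{pst}}(\Qp(1))$. Since $\Qp(1)$ is crystalline, the module $D$ is one-dimensional over $K_0^{\mathrm{nr}}$ with trivial Galois action on a suitable basis vector, has vanishing monodromy $N = 0$, Frobenius $\varphi$ acting as multiplication by $p^{-1}$, and Hodge filtration with $\Fil^{-1} D_K = D_K$ and $\Fil^0 D_K = 0$. Consequently, on $D_{\st, L}$ the operator $\Phi = \varphi^f$ acts as the scalar $q^{-1}$.

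With this identification, the hypotheses of \S\ref{sect:convenient} are immediate: $N = 0$ by construction, and $P(\Phi) = P(q^{-1})$ together with $P(q\Phi) = P(1)$ are nonzero scalars by the assumptions of the proposition, hence bijective on $D_{\st, L}$. Thus $D$ is convenient, and \eqref{eq:convenient} yields
\[ H^1_{\st}(\Qp(1)) \;\cong\; D_K / \Fil^0 D_K \;=\; D_K, \]
which is one-dimensional over $K$, delivering the required isomorphism with $K$.

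The argument is essentially a bookkeeping exercise with no serious obstacle; the main thing to get right is the alignment of conventions. One must recall that the Frobenius eigenvalue of $\Qp(1)$ is $p^{-1}$, so that $\Phi$ has eigenvalue $q^{-1}$ (matching the condition $P(q^{-1}) \ne 0$), and that $\Qp(1)$ has Hodge--Tate weight $-1$, ensuring $\Fil^0 D_K = 0$ so that $D_K / \Fil^0 D_K$ is nonzero. The remaining condition $P(1) \ne 0$ then corresponds exactly to invertibility of $P(q\Phi)$, as required.
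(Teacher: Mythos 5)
Your proof is correct and follows exactly the route the paper intends: Proposition \ref{prop:Qp1} is placed immediately after the convenient-module criterion and \eqref{eq:convenient} precisely so that it follows by applying that criterion to $D = \DD_{\mathrm{pst}}(\Qp(1))$, with $\varphi$-eigenvalue $p^{-1}$ (hence $\Phi$-eigenvalue $q^{-1}$), $N = 0$, $\Fil^0 D_K = 0$, and the two nonvanishing hypotheses translating to bijectivity of $P(\Phi)$ and $P(q\Phi)$, just as you say. The bookkeeping of conventions you flag (eigenvalue $q^{-1}$ and Hodge--Tate weight $-1$) is exactly right.
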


   The key to our description of syntomic regulators is the following. We now take $L = K$.

   \begin{proposition}
    \label{prop:cupprod1}
    Suppose that $D$ is crystalline (i.e.~$N = 0$ on $D_{\st}$ and $G_K$ acts trivially), and that $D$ is convenient. Then for any $\lambda \in \Fil^0 D^*(1)_K = (D_K / \Fil^0)^*$, there is a polynomial $Q$ such that $\lambda \in H^0_{\st, K, P}(D^*(1))$ and $(P \star Q)(1) \ne 0$, $(P \star Q)(q^{-1}) \ne 0$; and if $P$ is such a polynomial, then we have a commutative diagram
    \begin{diagram}
     D_K / \Fil^0 & \rTo_{\cong}&  H^1_{\st, K, P}(D) \\
     \dTo^{\lambda} & & \dTo_{\cup\lambda} \\
     K & \rEq & H^1_{\st, K, P \star Q}(\Qp(1))
    \end{diagram}
    where the right-hand vertical map is the cup-product of the previous section.
   \end{proposition}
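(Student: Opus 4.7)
My strategy is to verify the diagram at the cocycle level, using the explicit formula for the cup product from the proof of the cup-product proposition together with the inverse \eqref{difference} of the isomorphism \eqref{eq:convenient}.

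To check existence of $Q$: since $D$ is convenient and crystalline, so is $D^*(1)$, and the $\Phi$-eigenvalues on $D^*(1)_{\st, K}$ are of the form $q^{-1}\gamma_k^{-1}$, where $\{\gamma_k\}$ denotes the set of $\Phi$-eigenvalues on $D_{\st, K}$. As $\Phi$ is invertible, the rescaled minimal polynomial of $\Phi$ on the cyclic $K[\Phi]$-module generated by $\lambda$ lies in $1 + TK[T]$ and annihilates $\lambda$; take $Q$ to be this polynomial (or any multiple of it in $1 + TK[T]$). Together with $N\lambda = 0$ and $\lambda \in \Fil^0$, this makes $(\lambda, \lambda) \in C^0_{\st, K, Q}(D^*(1))$ a cocycle. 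The non-vanishing conditions follow automatically from convenience: a root $\alpha_i\beta_j$ of $P \star Q$ equals $1$ only if $\alpha_i = q\gamma_k$ for some $k$, contradicting bijectivity of $P(q\Phi)$; and equals $q^{-1}$ only if $\alpha_i = \gamma_k$, contradicting bijectivity of $P(\Phi)$.

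Next I would compute the cup product. Under \eqref{eq:convenient} a class $\bar y \in D_K/\Fil^0 D_K$ is represented by the cocycle $(0, 0, y) \in C^1_{\st, K, P}(D)$. Feeding this and $(\lambda, \lambda) \in C^0_{\st, K, Q}(D^*(1))$ into the $(w, x, y)$-by-$(u', v')$ entry of the cup-product table, the first two components of the output vanish because $w = 0$ and $x = 0$ kill the tensor products $w \otimes u'$ and $x \otimes u'$, while the third reduces to $y \otimes \lambda$ since $u' = v' = \lambda$ makes the homotopy parameter irrelevant. Identifying $D \otimes D^*(1) \cong \Qp(1)$ and writing $\lambda(y) \in K = \Qp(1)_K$ for the duality pairing, the cup product is thus represented by the cocycle $(0, 0, \lambda(y)) \in C^1_{\st, K, P \star Q}(\Qp(1))$.

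Applying \eqref{difference} to this cocycle yields $\lambda(y) - \iota\bigl((P \star Q)(\Phi)^{-1} \cdot 0\bigr) \bmod \Fil^0 = \lambda(y) \in K$ (using $\Fil^0 \Qp(1)_K = 0$), which agrees with the image of $\bar y$ under the left-hand evaluation map $\bar y \mapsto \lambda(y)$. The main obstacle is the first step, where one must show that convenience alone is enough to force the non-vanishing of $(P \star Q)(1)$ and $(P \star Q)(q^{-1})$ for \emph{any} valid choice of $Q$; once that is done, the cup-product computation is essentially forced by the large number of zeros in the input cocycles, with no real calculation required.
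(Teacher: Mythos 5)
Your proof is correct, and it is exactly the intended argument: the paper does not write out a proof but the remark immediately following the proposition (about the cup product on the summands $D_K \subseteq C^1$ and $\Fil^0 D_K \subseteq C^0$ reducing to the tensor product) signals precisely the cocycle-level computation you carry out. Two minor points. First, on the ``main obstacle'' you flag at the end: you do not in fact need the non-vanishing conditions to hold for \emph{every} $Q$ annihilating $\lambda$ --- the statement is existential, and your chosen $Q$ (the rescaled minimal polynomial of $\Phi$ on the cyclic submodule generated by $\lambda$) has all its roots among the eigenvalues $\{q^{-1}\gamma_k^{-1}\}$ of $\Phi$ on $D^*(1)_{\st,K}$, for which your eigenvalue comparison with the roots of $P$ does the job; for an arbitrary multiple of that minimal polynomial in $1+TK[T]$ the extra roots could spoil the non-vanishing, so the parenthetical ``or any multiple of it'' should be dropped. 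Second, writing the degree-$0$ cocycle as $(\lambda,\lambda)$ tacitly uses the crystalline comparison to identify $D^*(1)_{\st,K}$ with $D^*(1)_K$ (so that an element of $\Fil^0 D^*(1)_K$ has a well-defined Frobenius-module avatar), which is legitimate here since $D$ is assumed crystalline but deserves a one-line mention.
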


   \begin{remark}
    If we take $\lambda = 1$ in the formulae for the cup-product pairing, we see that the cup product $C^1_{\st, K, P} \times C^0_{\st, K, P} \to C^1_{\st, K, P}$ restricted to the direct summands $D_K \subseteq C^1_{\st, K, P}$ and $\Fil^0 D_K \subseteq C^0_{\st, K, P}$ is just the natural tensor product.
   \end{remark}

   \begin{remark}
    The utility of this remark is that if $D$ is convenient, it allows us to completely describe an element of $H^1_{\st, L, P}(D)$ via its cup-products with elements of $H^0_{\st, L, Q}(D^*(1))$ for suitable polynomials $Q$. Note that even if $P$ is some very simple specific polynomial such as $1 - T$, we still need to be able to choose $Q$ freely for this to work.
   \end{remark}

\section{Finite polynomial cohomology for general varieties}

 \subsection{Summary of the theory of \Ne--\Ni}

  We briefly summarize some of the main results of the paper \cite{NN}. Let $\operatorname{Var}(K)$ be the category of varieties over $K$ (i.e.~reduced separated $K$-schemes of finite type).

  \begin{theorem}
   \label{thm:NNtheory1}
   There exists a functor $X \mapsto \rgam_{\syn}(X_h, *)$ from $\operatorname{Var}(K)$ to the category of graded-commutative differential graded $E_\infty$-algebras over $\Qp$, equipped with period morphisms
   \[\rho_{\syn}: \rgam_{\syn}(X_h, r) \to \rgam_{\et}(X_{K, \et}, \Qp(r)). \]
   The cohomology theory $X \mapsto \rgam_{\syn}(X_h, r)$ has pushforward maps for projective morphisms, and has a functorial map from Voevodsky's motivic cohomology, compatible with the \'etale realization map via $\rho_{\syn}$.
  \end{theorem}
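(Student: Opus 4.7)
The statement summarises results of \Ne--\Ni from \cite{NN}; the plan is to sketch their construction, which proceeds in four stages.

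\emph{Stage 1: syntomic cohomology on semistable models.} For a proper semistable scheme $\mathcal{X}$ over $V$ one has Hyodo--Kato cohomology $\rgam_{\mathrm{HK}}(\mathcal{X})$ equipped with Frobenius $\varphi$ and monodromy $N$, together with a canonical isomorphism $\rgam_{\mathrm{HK}}(\mathcal{X}) \otimes_{K_0} K \cong \rgam_{\dR}(\mathcal{X}_K)$ after base change. Define $\rgam_{\syn}(\mathcal{X}, r)$ as the homotopy fibre of a map encoding $N = 0$, $\varphi = p^r$ on the crystalline side and the Hodge filtration $\Fil^r$ on the de Rham side, exactly mirroring the finite-dimensional complex $C_{\st}(D)$ from Section~1 at the derived level.

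\emph{Stage 2: descent to $\operatorname{Var}(K)$.} Using de Jong's theorem on alterations, every $X \in \operatorname{Var}(K)$ admits an $h$-hypercover whose terms are (open subschemes of) semistable schemes. Following Beilinson's approach via Godement-type resolutions, one establishes sufficient hyperdescent of HK and filtered dR cohomology to define $\rgam_{\syn}(X_h, r)$ as a homotopy limit over such a hypercover, independent of choices.

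\emph{Stage 3: multiplicative structure and period map.} Cup products on HK and dR cohomology are compatible with $\varphi$, $N$ and $\Fil$; assembling them via an explicit chain-level formula analogous to the table in Section~\ref{sect:cupprod1} yields a DGA structure on the semistable-level syntomic complex, which after descent rigidifies to an $E_\infty$-algebra structure on $\rgam_{\syn}(X_h, r)$. The period morphism $\rho_{\syn}$ is induced by the log $p$-adic comparison (Fontaine--Kato--Tsuji), which identifies $\rgam_{\mathrm{HK}}(\mathcal{X}) \otimes \BB_{\st}$ with $\rgam_{\et}(\mathcal{X}_{\overline{K}}, \Qp) \otimes \BB_{\st}$ compatibly with all structures.

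\emph{Stage 4: pushforwards and the motivic map.} Projective pushforwards are constructed from Poincar\'e duality on each of the HK and filtered dR sides and then assembled into maps of the associated mapping cones. The functorial map from Voevodsky motivic cohomology is obtained by building a multiplicative cycle class map into $\rgam_{\syn}(X_h, r)$ and invoking the universal property of motivic cohomology; compatibility with $\rho_{\syn}$ reduces to the compatibility between the syntomic and \'etale cycle class maps.

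The main obstacle is Stage 2: Hyodo--Kato cohomology is naturally defined only on (semi)stable log-schemes, whereas the theorem demands a functor on all of $\operatorname{Var}(K)$. Making this work requires genuine derived hyperdescent of an object carrying Frobenius, monodromy, and the Hodge filtration simultaneously, which is the technical heart of \cite{NN}. A secondary difficulty is maintaining coherent multiplicativity through descent, so that the final object is a true $E_\infty$-algebra rather than merely a complex up to quasi-isomorphism.
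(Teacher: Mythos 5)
The paper gives no proof of this statement: Theorem \ref{thm:NNtheory1} is an explicit summary of the main results of \cite{NN}, quoted as a black box, so there is no internal argument to compare your sketch against. As a description of the \Ne--\Ni construction your four stages are broadly accurate, and you correctly identify the $h$-descent of the Frobenius/monodromy/filtration package as the technical heart. Two points where your sketch diverges from what \cite{NN} (and the present paper, in its parallel $P$-syntomic development) actually does: first, the syntomic complex on an arithmetic pair is defined as a mapping fibre built from log-\emph{crystalline} cohomology and the map $\gamma_r^{-1}$ to $\rgam_{\dR}/\Fil^r$; the Hyodo--Kato description with $N$ and $\varphi$ that you take as the definition only appears afterwards, as a non-canonical quasi-isomorphism depending on a choice of uniformizer (compare Proposition \ref{prop:quasiisom} here). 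Second, the projective pushforwards are not assembled by hand from Poincar\'e duality on the HK and de Rham sides; in D\'eglise's appendix to \cite{NN} (and in Theorem \ref{thm:pushfwd} here) they come from verifying that the sheafified theory is $h$-local, $\mathbf{A}^1$-local and satisfies the projective bundle formula, upgrading it to a Tate $\Omega$-spectrum, and invoking the motivic six-functor machinery --- this is also what produces the functorial map from Voevodsky's motivic cohomology, rather than a separately constructed cycle class map. Your route for Stage 4 would face serious functoriality and coherence issues if pursued literally, whereas the spectrum-level argument gets these for free.
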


  For $X$ a variety over $K$, let $H^j_{HK}(X_h)$ and $H^j_{\dR}(X_h)$ be the extensions of Hyodo--Kato and de Rham cohomologies defined by Beilinson, and $\iota_{\dR}^B: H^j_{HK}(X_h) \otimes_{K_0} K \to H^j_{\dR}(X_h)$ the comparison isomorphism relating them. One knows (by the results of Beilinson cited in \cite[\S 4]{NN}) that we have
  \[ H^j_{HK}(X_h) = D^{j}(X_h)_{\st}, \quad H^j_{\dR}(X_h) = D^{j}(X_h)_{K}\]
  where $D^{j}(X_h)$ is the filtered $(\varphi, N, G_K)$-module $\DD_{\mathrm{pst}}(H^j(X_{\overline{K}, \et}))$.

  \begin{theorem}
   \label{thm:NNtheory2}
   There is a ``syntomic descent spectral sequence''
   \[ H^i_{\st}(D^{j}(X_h)(r)) \Rightarrow H^{i + j}_{\syn}(X_h, r),\]
   and the morphisms
   \[ H^i_{\st}(D^{j}(X_h)(r)) \to H^i(K, H^j_{\et}(X_{\overline{K}, \et}, \Qp(r))\]
   from Theorem \ref{thm:phinmodproperties} assemble into a morphism from the syntomic spectral sequence to the Hochschild--Serre spectral sequence
   \( H^i(K, H^j_{\et}(X_{\overline{K}, \et}, \Qp(r))) \Rightarrow H^{i+j}_{\et}(X_{K, \et}, \Qp(r)),\)
   compatible with the period morphism $\rho_{\syn}$ on the abutment.
  \end{theorem}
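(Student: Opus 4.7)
The plan is to invoke the constructions of \Ne--\Ni \cite{NN} and to repackage them into the spectral-sequence form asserted. First, I would recall that in \cite{NN} the syntomic cohomology $\rgam_{\syn}(X_h, r)$ is constructed at the chain level as (essentially) the totalization of a bicomplex obtained by applying the $C_{\st}(-)$ construction termwise to a functorial complex of filtered $(\varphi, N, G_K)$-modules $D(X_h)(r)$. This underlying complex is built from Beilinson's $h$-sheafified Hyodo--Kato and de Rham complexes, together with their Frobenius, monodromy, Galois action, and Hodge filtration; its cohomology objects are precisely the filtered $(\varphi, N, G_K)$-modules $D^j(X_h)(r) = \DD_{\mathrm{pst}}(H^j_{\et}(X_{\overline{K}}, \Qp))(r)$.

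Granting this chain-level description, the spectral sequence is the hypercohomology spectral sequence of the bicomplex, set up so that the $E_2$-page is obtained by first computing vertical cohomology (the cohomology of $D(X_h)(r)$ as a complex of $(\varphi, N, G_K)$-modules) and then horizontal cohomology (the $C_{\st}$-cohomology of each $D^j(X_h)(r)$). This gives $E_2^{i,j} = H^i_{\st}(D^j(X_h)(r))$, converging to the cohomology of the totalization, i.e.\ to $H^{i+j}_{\syn}(X_h, r)$. Functoriality in $X$ and compatibility with the product structure are inherited directly from the bicomplex-level constructions of \cite{NN}.

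For the morphism of spectral sequences, I would observe that the maps $H^i_{\st}(D) \to H^i(K, V)$ of Theorem \ref{thm:phinmodproperties} arise, for each weakly admissible $D$, from an explicit natural morphism of complexes from $C_{\st}(D)$ to a continuous cochain complex $C^\bullet_{\mathrm{cts}}(G_K, V)$. Applying this termwise to the pst-module bicomplex produces a morphism to an analogous ``Galois--\'etale'' bicomplex whose hypercohomology spectral sequence is precisely Hochschild--Serre. On totalizations, this morphism of bicomplexes is identified in \cite{NN} with the period map $\rho_{\syn}$, so the induced morphism of spectral sequences has the claimed form on $E_2$-pages and is compatible with $\rho_{\syn}$ on the abutment.

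The main obstacle is ensuring that all of these structures really exist as genuine maps of bicomplexes, rather than merely in the derived category or on individual cohomology groups; this is the most delicate point of \cite{NN}, requiring Beilinson's derived formalism for Hyodo--Kato cohomology together with a chain-level lift of the Bloch--Kato--\Ne comparison map featured in Theorem \ref{thm:phinmodproperties}. Once these chain-level constructions are granted, both the existence of the spectral sequence and its compatibility with $\rho_{\syn}$ drop out of standard filtered-complex bookkeeping.
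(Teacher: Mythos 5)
The paper gives no proof of this statement: it is quoted as a result of \Ne--\Ni \cite{NN} (their syntomic descent spectral sequence and the compatibility assertion of their Theorem B), so there is no in-paper argument to compare against. Your sketch is an accurate reconstruction of the strategy used in \cite{NN} --- realizing $\rgam_{\syn}(X_h,r)$ at the chain level as $C_{\st}$ applied to a complex of $(\varphi,N,G_K)$-modules built from Beilinson's Hyodo--Kato and de Rham complexes, taking the second hypercohomology spectral sequence of the resulting double complex to get $E_2^{i,j}=H^i_{\st}(D^j(X_h)(r))$, and mapping termwise to continuous $G_K$-cochains to obtain the morphism to Hochschild--Serre compatibly with $\rho_{\syn}$ --- and you correctly identify the delicate point (that all comparisons must exist as genuine chain-level maps, not merely in the derived category) as the substance of \cite{NN} rather than something to be supplied here.
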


 \subsection{Definition of $P$-syntomic cohomology}

 We now develop a theory which very closely imitates that of \cite{NN}, but modified to use general polynomials in Frobenius in the place of $1 - \varphi$, and replacing the $p$-power Frobenius $\varphi$ with a ``partially linearized'' Frobenius. We choose a finite extension $L / \Qp$ is a finite extension contained in $K$, with residue class degree $f = [L_0 : \Qp]$, and $P \in 1 + T L[T]$.

  Let $(U, \overline{U})$ be an ``arithmetic pair'' (in the sense of \emph{op.cit.}), log-smooth over $V^\times$ (i.e.~$\Spec(V)$ with the log structure associated to the closed point).

  Let $\rgam_{\cris}(U, \overline{U})_{\QQ}$ be the rational crystalline cohomology (defined as
  \[ \Qp \otimes_{\ZZ_p} \operatorname{holim}_n \rgam(\overline{U}_{\et}, Ru_{U_n^\times / W_n(k) *} \mathscr{O}_{U_n^\times / W_n(k)}),\]
  cf.~\S 3.1 of \emph{op.cit.}). This is a complex of $K_0$-vector spaces, hence of $L_0$-vector spaces, and we write $\rgam_{\cris}(U, \overline{U})_{L} = L \otimes_{L_0} \rgam_{\cris}(U, \overline{U})_{\QQ}$. There is a $K_0$-semilinear Frobenius $\varphi$ on $\rgam_{\cris}(U, \overline{U})_{\QQ}$, and we let $\Phi = \varphi^f$, which is $L_0$-linear and thus extends to an $L$-linear operator on $\rgam_{\cris}(U, \overline{U})_{L}$. For $r \ge 0$ we write $\Phi_r = q^{-r} \Phi = (p^{-r} \varphi)^f$.

  \begin{remark}
   Curiously, the complex $\rgam_{\cris}(U, \overline{U})_{\QQ}$ is already a complex of $K$-vector spaces, but this $K$-linear structure interacts very badly with the Frobenius, and passing to Frobenius eigenspaces ``kills off'' all contributions from $K \setminus K_0$. So we must ``add the $K$-linear structure a second time'' in order to obtain a $K$-linearized theory of syntomic cohomology.
  \end{remark}

  For any $r \ge 0$, the quasi-isomorphism $\gamma_r^{-1} : \rgam_{\cris}(U, \overline{U}, \mathscr{O} / \mathscr{J}^{[r]})_{\QQ} \rTo^\sim \rgam_{\dR}(U, \overline{U}_K) / \Fil^r$ gives rise to a morphism (not, of course, a quasi-isomorphism)
  \[ \gamma_{r, L}^{-1}: \rgam_{\cris}(U, \overline{U})_{L} \to \rgam_{\dR}(U, \overline{U}_K) / \Fil^r.\]

  \begin{definition}
   We define
   \[ \rgam_{\syn, L, P}(U, \overline{U}, r) = \left[\rgam_{\cris}(U, \overline{U})_{L} \rTo^{P(\Phi_r), \gamma_{r, L}^{-1}} \rgam_{\cris}(U, \overline{U})_{L} \oplus \rgam_{\dR}(U, \overline{U}_K) / \Fil^r\right] \]
   (where the brackets signify mapping fibre, in the $\infty$-derived category of abelian groups).
  \end{definition}

  \begin{remark}
   Note that the use $P(\Phi_r)$, rather than $P(\Phi)$; this choice gives somewhat cleaner formulations of some results (e.g.~the pushforward maps and the syntomic descent spectral sequence), but has the disadvantage of introducing a notational discrepancy between $\rgam_{\syn, L, P}(U, \overline{U}, r)$ and the finite-polynomial cohomology of \cite{Bes97}.
  \end{remark}

  The complexes $\rgam_{\syn, L, P}(U, \overline{U}, r)$ are complexes of $L$-vector spaces, functorial in pairs $(U, \overline{U})$, equipped with $L$-linear cup-products
  \[ \rgam_{\syn, L, P}(U, \overline{U}, r) \times \rgam_{\syn, L, Q}(U, \overline{U}, s) \rTo \rgam_{\syn, L,  P \star Q}(U, \overline{U}, r+s) \]
  which are associative and graded-commutative, up to coherent homotopy (i.e.~the direct sum
  \[ \bigoplus_{r, P}\rgam_{\syn, L, P}(U, \overline{U}, r)\]
  is an $E_\infty$-algebra over $L$).

  \begin{remark}
   Here $P \star Q$ is the convolution of the polynomials $P$ and $Q$. We give an explicit formula for the cup product in Proposition \ref{prop:cupprod} below.
  \end{remark}

  The following properties are immediate:

  \begin{proposition}
   \label{prop:fptheoryproperties}
   \begin{enumerate}
    \item (Compatibility with syntomic cohomology) If $L = \Qp$ and $P(T)$ is the polynomial $1 - T$, then $\rgam_{\syn, L, P}(U, \overline{U}, r)$ coincides with $\rgam_{\syn}(U, \overline{U}, r)_{\QQ}$ as defined in \cite{NN}.
    \item (Change of $P$) If $P, Q$ are two polynomials, there is a map
    \[ \rgam_{\syn, L, P}(U, \overline{U}, r)_{\QQ} \to \rgam_{\syn, L, PQ}(U, \overline{U}, r)_{\QQ}\]
    functorial in $(U, \overline{U})$ and compatible with cup-products, which is given by the diagram
    \[\begin{diagram}
     \rgam_{\cris}(U, \overline{U})_{L} &\rTo^{P(\Phi_r), \gamma_{r, L}^{-1}} &\rgam_{\cris}(U, \overline{U})_{L} \oplus \rgam_{\dR}(U, \overline{U}_K) / \Fil^r\\
     \dTo^{\mathrm{id}} & & \dTo_{(Q(\Phi_r), \mathrm{id})} \\
     \rgam_{\cris}(U, \overline{U})_{L} &\rTo^{PQ(\Phi_r), \gamma_{r, L}^{-1}} &\rgam_{\cris}(U, \overline{U})_{L} \oplus \rgam_{\dR}(U, \overline{U}_K) / \Fil^r
    \end{diagram}\]
    \item (Change of L) Let $L' / L$ be a finite extension, and suppose that we have $P(T) = P'(T^d)$ for some polynomial $P'$, where $d = [L'_0 : L_0] = f'/f$. Then there is a natural map
    \[ \rgam_{\syn, L, P}(U, \overline{U}, r) \rTo \rgam_{\syn, L', P'}(U, \overline{U}, r),\]
    functorial in $(U,\overline{U})$ and compatible with cup-products. If $L' / L$ is an unramified extension, this is an isomorphism.
   \end{enumerate}
  \end{proposition}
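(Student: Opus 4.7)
The proposition has three parts, each essentially an unwinding of definitions. I plan to treat them in order.

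For (1), the strategy is to verify that all ingredients match. The hypothesis $L = \Qp$ forces $L_0 = \Qp$ and hence $\rgam_{\cris}(U, \overline{U})_L = \rgam_{\cris}(U, \overline{U})_{\QQ}$; $f = 1$ gives $\Phi = \varphi$ and $q = p$, so $\Phi_r = p^{-r}\varphi$. Combined with $P(T) = 1 - T$, the operator $P(\Phi_r) = 1 - p^{-r}\varphi$ agrees with the twisted Frobenius used by \Ne--\Ni in the mapping fibre defining $\rgam_{\syn}(U, \overline{U}, r)_{\QQ}$, and $\gamma_{r, L}^{-1}$ is by construction the $\gamma_r^{-1}$ of \emph{op.\ cit.}

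For (2), I plan to invoke functoriality of the mapping fibre. The displayed square commutes on the nose: $Q(\Phi_r) \circ P(\Phi_r) = (PQ)(\Phi_r)$ because single-variable polynomials commute, and the de Rham component is unaltered. Taking mapping fibres of the two rows yields the induced morphism; functoriality in $(U, \overline{U})$ is inherited from the ingredients.

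For (3), I would build the map factor-by-factor. On the crystalline side, the natural arrow $L \otimes_{L_0} \rgam_{\cris}(U, \overline{U})_{\QQ} \to L' \otimes_{L'_0} \rgam_{\cris}(U, \overline{U})_{\QQ}$, $\ell \otimes x \mapsto \ell \otimes x$, is well-defined because $L_0 \subset L'_0$ and $L \subset L'$. To see it intertwines $P(\Phi_r)$ with $P'(\Phi'_r)$, I would use that $\Phi' = \varphi^{fd} = \Phi^d$ on $\rgam_{\cris, \QQ}$ and $q' = q^d$, so $\Phi'_r = \Phi_r^d$; the hypothesis $P(T) = P'(T^d)$ then gives $P'(\Phi'_r) = P(\Phi_r)$. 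The de Rham part $\rgam_{\dR}(U, \overline{U}_K)/\Fil^r$ depends only on $K$ and is unchanged. When $L'/L$ is unramified, the identification $L \otimes_{L_0} L'_0 \cong L'$ (from $L_0 = L \cap L'_0$) promotes the crystalline morphism to an isomorphism termwise, so the induced map of mapping fibres is a quasi-isomorphism.

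Cup-product compatibility in all three parts should follow from the explicit formula of Section~\ref{sect:cupprod1}. In (3) one must be careful, since the substitution $T \mapsto T^d$ does not strictly commute with $\star$ (a root-chase gives $P_1 \star P_2 = \bigl((P'_1 \star P'_2)(T^d)\bigr)^d$ rather than $(P'_1 \star P'_2)(T^d)$); this is the step I expect to demand the most care, and it likely forces the compatibility to be formulated at the level of the global $E_\infty$-algebra structure, with the change-of-$P$ maps of (2) mediating between polynomials that differ by this non-strict interaction.
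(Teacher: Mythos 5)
Your argument for all three parts coincides with the paper's own terse proof: (1) and (2) are taken as immediate from the construction, and for (3) the key point is precisely the identity $P(\Phi_r) = P'(\Phi'_r)$ (the paper writes $\Phi'_r = (p^{-r}\varphi)^{f'}$, $\Phi_r = (p^{-r}\varphi)^{f}$) giving the commutative square of mapping fibres, together with the identification $L \otimes_{L_0} \rgam_{\cris}(U,\overline{U})_{\QQ} \cong L' \otimes_{L'_0} \rgam_{\cris}(U,\overline{U})_{\QQ}$ when $L'/L$ is unramified. Your final paragraph goes beyond the paper's proof (which does not discuss cup-product compatibility at all) and raises a legitimate point: the identity $P_1 \star P_2 = \bigl((P'_1 \star P'_2)(T^d)\bigr)^d$ is correct, so the change-of-$L$ map of (3) is compatible with cup-products only up to composition with a change-of-$P$ map as in (2), from $P'_1 \star P'_2$ to $(P'_1 \star P'_2)^d$. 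This is a real subtlety that the paper leaves implicit, and your reading that the compatibility must be understood at the level of the full $E_\infty$-algebra $\bigoplus_{r,P}\rgam_{\syn,L,P}(r)$, with change-of-$P$ maps mediating, is the right resolution.
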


  \begin{proof}
   (1) and (2) are obvious by construction. For (3), note that we have $\Phi'_r = (p^{-r} \varphi)^{f'}$ and $\Phi_r = (p^{-r} \varphi)^{f}$, so $P(\Phi_r)=P'(\Phi'_r)$. Hence we have a commutative diagram
    \[\begin{diagram}
     \rgam_{\cris}(U, \overline{U})_{L} &\rTo^{P(\Phi_r), \gamma_{r, L}^{-1}} &\rgam_{\cris}(U, \overline{U})_{L} \oplus \rgam_{\dR}(U, \overline{U}_K) / \Fil^r\\
     \dTo & & \dTo \\
     \rgam_{\cris}(U, \overline{U})_{L'} &\rTo^{P'(\Phi'_r), \gamma_{r, L'}^{-1}} &\rgam_{\cris}(U, \overline{U})_{L'} \oplus \rgam_{\dR}(U, \overline{U}_K) / \Fil^r
    \end{diagram}\]
    where the vertical maps are the natural maps induced from the inclusion $L\subset L'$.

    If $L'/L$ is unramified, so $L'=LL_0'$, then we have a canonical isomorphism of complexes of $L_0'$-vector spaces
    \[ L \otimes_{L_0} \rgam_{\cris}(U, \overline{U})_{\QQ}\cong L' \otimes_{L_0'} \rgam_{\cris}(U, \overline{U})_{\QQ},\]
    so the vertical maps are isomorphisms.
  \end{proof}

  We will be particularly interested in a special case of this:

  \begin{proposition}
   If $P(1) = 0$ then there is a morphism of complexes
   \[ \rgam_{\syn}(U, \overline{U}, r)_{\QQ} \to \rgam_{\syn, L, P}(U, \overline{U}, r),\]
   functorial in $(U,\overline{U})$ and compatible with cup-products.
  \end{proposition}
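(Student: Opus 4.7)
The plan is to realize the desired morphism as a composition of three natural maps drawn from Proposition \ref{prop:fptheoryproperties}. Since $P \in 1 + T L[T]$ has $P(1) = 0$, we may factor $P(T) = (1-T) R(T)$ with $R \in 1 + T L[T]$ (the normalization $R(0) = 1$ is forced by $P(0) = 1$). The bridge between the \Ne--\Ni\ polynomial $1-T$ over $\Qp$ and our target $P$ over $L$ is the elementary identity
\[ 1 - T^f = (1-T)(1 + T + T^2 + \cdots + T^{f-1}), \]
which allows us to pass through the intermediate theory $\rgam_{\syn, \Qp, 1 - T^f}(U, \overline{U}, r)$ en route.

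In detail, I would first apply the change-of-$P$ map of part (2) over $L = \Qp$, with auxiliary polynomial $Q(T) = 1 + T + \cdots + T^{f-1} \in 1 + T \Qp[T]$, to obtain
\[ \rgam_{\syn}(U, \overline{U}, r)_{\QQ} = \rgam_{\syn, \Qp, 1-T}(U, \overline{U}, r) \to \rgam_{\syn, \Qp, 1 - T^f}(U, \overline{U}, r). \]
Next I would invoke part (3) with $L' = L$, $d = f$, and target polynomial $P'(T) = 1 - T$; since $1 - T^f = P'(T^f)$, this produces
\[ \rgam_{\syn, \Qp, 1-T^f}(U, \overline{U}, r) \to \rgam_{\syn, L, 1-T}(U, \overline{U}, r). \]
Finally, one more application of change-of-$P$, now over $L$ and with auxiliary polynomial $R$, yields
\[ \rgam_{\syn, L, 1-T}(U, \overline{U}, r) \to \rgam_{\syn, L, (1-T)R}(U, \overline{U}, r) = \rgam_{\syn, L, P}(U, \overline{U}, r). \]
Composing the three gives the sought morphism.

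Each constituent map is functorial in $(U, \overline{U})$ and compatible with cup products, by the corresponding clauses of Proposition \ref{prop:fptheoryproperties}; both properties therefore transfer to the composition. I do not foresee any serious obstacle: the argument is essentially formal once one spots the factorization $1 - T^f = (1-T)(1 + T + \cdots + T^{f-1})$ that makes the change-of-$L$ step of part (3) applicable starting from $\Qp$.
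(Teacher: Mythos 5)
Your argument is correct and follows essentially the same route as the paper, namely chaining together parts (1)--(3) of Proposition~\ref{prop:fptheoryproperties}. The paper collapses this into two moves --- change-of-$P$ over $\Qp$ from $1-T$ to the divisible polynomial $P(T^f)$, then change-of-$L$ --- whereas you unfold the same composite into three, passing through the intermediate theory $\rgam_{\syn, L, 1-T}(U, \overline{U}, r)$. Your ordering is in fact the more careful of the two: at each stage the polynomial you invoke genuinely lies in $1 + T\cdot(\text{base field})[T]$, whereas the paper's intermediate object $\rgam_{\syn, \Qp, P(T^f)}(U, \overline{U}, r)$ tacitly treats $P(T^f)$ as having $\Qp$-coefficients, which fails when $P\in 1+TL[T]$ is not already defined over $\Qp$; moving the change-of-$L$ step to the middle is exactly what repairs this.
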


  \begin{proof}
   This is built up by combining all three parts of the above proposition. Firstly, (1) identifies \Ne--\Ni's cohomology $\rgam_{\syn}(U, \overline{U}, r)_{\QQ}$ with the special case $P(T) = 1 - T, L = \Qp$ of our construction. If $P$ is any polynomial with $P(1) = 0$, then $P(T^f)$ is divisible by $1 - T$, so (2) gives a morphism
   \[ \rgam_{\syn, \Qp, 1-T}(U, \overline{U}, r) \to \rgam_{\syn, \Qp, P(T^f)}(U, \overline{U}, r)_{\Qp}.\]
   Finally (3) gives a morphism
   \[ \rgam_{\syn, \Qp, P(T^f)}(U, \overline{U}, r) \to \rgam_{\syn, L, P}(U, \overline{U}, r).\]
   All of these are visibly functorial in pairs $(U, \overline{U})$.
  \end{proof}

  We now show a relation analogous to Proposition 3.7 of \emph{op.cit.}. Suppose that $(U, \overline{U})$ is of Cartier type, so Hyodo--Kato cohomology is defined. We use Beilinson's variant of Hyodo--Kato cohomology, which has the advantage of having a comparison map $\iota_{\dR}^B: \rgam_{HK}^B(U, \overline{U})_{K} \rTo^{\sim} \rgam_{\dR}(U, \overline{U}_K)$ which is defined at the level of complexes and does not depend on making a choice of uniformizer of $K$.

  \begin{proposition}
   \label{prop:quasiisom}
   For each uniformizer $\pi$ of $V$, there is a quasi-isomorphism
   \[ \rgam_{\syn, L, P}(U, \overline{U}, r) \rTo_\sim^{\alpha'_{\pi, P, L}}
    \left[
    \begin{diagram}
     \rgam_{HK}^B(U, \overline{U})_{L} & \rTo^{(P(\Phi_r), \iota_{\dR}^B)}  & \rgam_{HK}^B(U, \overline{U})_{L} \oplus \rgam_{\dR}(U, \overline{U}_K) / \Fil^r \\
     \dTo^{N} & & \dTo^{(N, 0)}\\
     \rgam_{HK}^B(U, \overline{U})_{L} & \rTo^{P(\Phi_{r-1})} & \rgam_{HK}^B(U, \overline{U})_{L}
    \end{diagram}\right]
   \]
   where $q = p^f$.
  \end{proposition}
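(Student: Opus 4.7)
The plan is to adapt the proof of \cite[Proposition 3.7]{NN}, which handles the special case $L = \Qp$, $P(T) = 1 - T$, with only minor modifications to accommodate the more general polynomial $P$ and the partially linearized Frobenius $\Phi = \varphi^f$. The key geometric input, provided by Beilinson's construction of $\rgam_{HK}^B$, is a quasi-isomorphism (depending on the choice of uniformizer $\pi$)
\[\rgam_{\cris}(U, \overline{U})_{L} \xrightarrow{\sim} \left[\rgam_{HK}^B(U, \overline{U})_{L} \xrightarrow{N} \rgam_{HK}^B(U, \overline{U})_{L}\right]\]
identifying rational log-crystalline cohomology with the total complex of the monodromy operator on Hyodo--Kato cohomology, in such a way that $\gamma_{r, L}^{-1}$ factors through $\iota_{\dR}^B$ applied to the degree-$0$ component. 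The Frobenius $\Phi$ acts on the degree-$1$ copy of $\rgam_{HK}^B$ by $q\Phi$, because the relation $N\varphi = p\varphi N$ yields $N\Phi = q\Phi N$ upon passing to $\Phi = \varphi^f$; equivalently $N\Phi_r = \Phi_{r-1} N$, so that $P(\Phi_{r-1}) N = N P(\Phi_r)$ for every polynomial $P$.

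I would then substitute this quasi-isomorphism into the definition of $\rgam_{\syn, L, P}(U, \overline{U}, r)$ as the mapping fibre of $(P(\Phi_r), \gamma_{r, L}^{-1})$, producing an iterated mapping fibre of the form
\[\left[\rgam_{HK}^B(U, \overline{U})_{L} \xrightarrow{N} \rgam_{HK}^B(U, \overline{U})_{L}\right] \xrightarrow{(P(\Phi_r), \iota_{\dR}^B)} \left[\rgam_{HK}^B(U, \overline{U})_{L} \xrightarrow{N} \rgam_{HK}^B(U, \overline{U})_{L}\right] \oplus \rgam_{\dR}(U, \overline{U}_K)/\Fil^r.\]
Unfolding this iterated fibre as the total complex of a bicomplex, and using the commutation $P(\Phi_{r-1}) N = N P(\Phi_r)$ to install $N$ as the vertical differential on both columns (with the de Rham factor in the right column annihilated by $N$), yields precisely the bicomplex appearing in the statement.

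The main obstacle is not the homological rearrangement but the compatibility of the Hyodo--Kato comparison with the various structures in play: specifically, verifying that the map labelled $\iota_{\dR}^B$ in the target really does arise as the composition of Beilinson's comparison with $\gamma_{r, L}^{-1}$, and that the right-hand vertical map takes the form $(N, 0)$ (so the de Rham factor lies only in the top row). These compatibilities are already established in the proof of \cite[Proposition 3.7]{NN} in the special case $P(T) = 1 - T$, and the only additional algebraic input needed for general $P$ is the commutation $P(\Phi_{r-1}) N = N P(\Phi_r)$ noted above; so the general case follows by a formal extension of \emph{op.\ cit.}
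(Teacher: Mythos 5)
Your proposal mischaracterizes the key geometric input and, as a consequence, omits the one nontrivial verification that the argument actually requires.

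The claimed quasi-isomorphism
$\rgam_{\cris}(U, \overline{U})_{L} \xrightarrow{\sim} \bigl[\rgam_{HK}^B(U, \overline{U})_{L} \xrightarrow{N} \rgam_{HK}^B(U, \overline{U})_{L}\bigr]$
does not exist. What the choice of $\pi$ actually gives (following \cite[\S 3]{NN}) is an identification, rationally, of $\rgam_{\cris}(U, \overline{U})_{\QQ}$ with $\rgam_{HK}^B(U, \overline{U}) \widehat\otimes_{W(k)} \widehat R_\pi$, where $\widehat R_\pi$ is a divided power series ring and the augmentation $\widehat R_\pi \to W(k)$ splits off a direct summand $I \otimes_{W(k)} \rgam_{HK}^B$ (with $I$ the augmentation ideal). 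The $\pi$-dependent map from crystalline to Hyodo--Kato cohomology is this augmentation, and it is emphatically not a quasi-isomorphism; the discrepancy is the full complex $I \otimes_{W(k)} \rgam_{HK}^B$. The statement becomes true only after passing to the mapping fibre of $P(\Phi_r)$, and the reason it becomes true is that $P(\Phi)$ acts invertibly on $I \otimes_{W(k)} H^i_{HK}(U, \overline{U})_L$, so the entire extra summand is killed at the level of mapping fibres. Your proposal treats the passage from crystalline to Hyodo--Kato as a formal substitution into an iterated cone, which glosses over exactly this acyclicity statement.

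That invertibility is precisely the point the paper's proof isolates as the only thing needing to be checked beyond what \cite{NN} already do, and it is not formal: for $P$ with $P(0)=1$ the formal power series $1/P(T) = \sum a_n T^n$ has positive radius of convergence (so $\operatorname{ord}_p(a_n) \ge -nA$ for some $A$ and all $n\gg 0$), and combined with the strong $p$-divisibility of $\Phi$ on $I$ this yields convergence of $\sum a_n \Phi^n$ on $I \otimes_{W(k)} H^i_{HK}(U, \overline{U})_L$, producing the inverse. The elementary commutation $P(\Phi_{r-1}) N = N P(\Phi_r)$ you highlight is correct but routine; the substantive new input for general $P$ is this convergence argument, which your proposal does not contain. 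As written, the proposal would also go through verbatim for, say, $P(T) = T$, for which there is no proposition to prove; the role of the hypothesis $P \in 1 + TL[T]$ enters exactly through the missing invertibility step.
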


  \begin{remark}
   This map does actually depend on the choice of a uniformizer $\pi$, although its source and target are independent of any such choice.
  \end{remark}

  \begin{proof}
   We follow exactly the same argument as the proof given in \emph{op.cit.}. It suffices to check that $P(\Phi)$ is invertible on $I \otimes_{W(k)} H^i_{HK}(U, \overline{U})_{L}$, for any $P \in 1 + T L[T]$, where $I$ is the ideal in a divided power series ring over $W(k)$ considered in \emph{op.cit.}. We note that since $P$ is monic the formal Laurent series $1/P(T) = \sum_{n \ge 0} a_n T^n$ has positive radius of convergence, so there is some $A$ such that $\operatorname{ord}_p(a_n) \ge -nA$ for $n \gg 0$. This implies the convergence of the series $\sum a_n \Phi^n$ on $I \otimes_{W(k)} H^i_{HK}(U, \overline{U})_{L}$, which gives an inverse of $P(\Phi)$.
  \end{proof}

  Continuing to follow \cite{NN}, we have

  \begin{proposition}
   If $(T, \overline{T})$ is the base-change of $(U, \overline{U})$ to a finite Galois extension $K' / K$ with Galois group $G$, then the natural map $f: (T, \overline{T}) \to (U, \overline{U})$ induces a quasi-isomorphism
   \[ f^*: \rgam_{\syn, L, P}(U, \overline{U}, r) \rTo^\sim \rgam(G, \rgam_{\syn, L, P}(T, \overline{T}, r)). \]
  \end{proposition}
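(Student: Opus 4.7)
The plan is to reduce Galois descent for $\rgam_{\syn, L, P}$ to the corresponding descent statements for its crystalline and de Rham constituents, following the same strategy as the analogous base-change result in \cite{NN} (with $P = 1-T$ and $L = \Qp$). Since $\rgam_{\syn, L, P}(U, \overline{U}, r)$ is by construction the mapping fibre of the morphism
\[ \bigl(P(\Phi_r),\, \gamma_{r, L}^{-1}\bigr)\colon \rgam_{\cris}(U, \overline{U})_{L} \to \rgam_{\cris}(U, \overline{U})_{L} \oplus \rgam_{\dR}(U, \overline{U}_K)/\Fil^r, \]
and since $G$ is finite with $|G|$ invertible in $\Qp$, the functor $\rgam(G, -)$ is exact on $\Qp$-linear complexes and commutes with the formation of mapping fibres. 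It therefore suffices to verify Galois descent separately for each of the two constituents $\rgam_{\cris}(U, \overline{U})_{L}$ and $\rgam_{\dR}(U, \overline{U}_K)/\Fil^r$, and to check that the connecting morphism is $G$-equivariant.

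For the constituent pieces I would directly invoke the corresponding descent results from \emph{op.cit.}: the natural maps
\[ \rgam_{\cris}(U, \overline{U})_{\QQ} \to \rgam\bigl(G,\, \rgam_{\cris}(T, \overline{T})_{\QQ}\bigr) \]
and (strictly compatibly with the filtrations)
\[ \rgam_{\dR}(U, \overline{U}_K) \to \rgam\bigl(G,\, \rgam_{\dR}(T, \overline{T}_{K'})\bigr) \]
are quasi-isomorphisms. The first upgrades to an $L$-linear version by applying the exact scalar extension $L \otimes_{L_0} (-)$; this is compatible with the Galois action because $L_0 \subseteq K$ is fixed pointwise by $G$, so that $G$ acts $L_0$-linearly. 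The second passes to a quasi-isomorphism on $\Fil^r$-quotients by the strictness statement. The operators $\Phi_r = (p^{-r}\varphi)^f$ and $\gamma_{r,L}^{-1}$ are defined functorially in the base field, so they commute with the Galois action induced by base change from $K$ to $K'$; hence $(P(\Phi_r), \gamma_{r, L}^{-1})$ intertwines the two descent quasi-isomorphisms, and taking mapping fibres on both sides yields the claim.

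The main obstacle, as in the proof of the original result in \cite{NN}, is the bookkeeping required to verify that all the required compatibilities genuinely hold at the level of complexes and not merely at the level of cohomology; this in turn rests on the functoriality in the base of the Beilinson-site constructions underlying $\rgam_{\cris}$ and $\rgam_{\dR}^B$. Since this work has already been carried out in \emph{op.cit.}, the only additional checks needed here are that the flat extension from $L_0$-coefficients to $L$-coefficients is $G$-equivariant (which is immediate, as $L_0, L \subseteq K$ are $G$-fixed) and that replacing $1 - \varphi$ by $P(\Phi_r)$ preserves $G$-equivariance of the connecting map (which is likewise clear since $\varphi$ and hence $\Phi_r$ and any polynomial in $\Phi_r$ are functorial in the base field).
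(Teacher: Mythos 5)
Your proposal is correct and takes essentially the same route as the paper: the paper's proof also defers to the argument of \cite{NN}, reducing Galois descent for the syntomic mapping fibre to descent for the constituent crystalline, de Rham, and Hyodo--Kato complexes and observing that everything remains compatible after extending scalars from $L_0$ to $L$. Your spelled-out version (that $\rgam(G,-)$ commutes with mapping fibres, that $G$ acts $L_0$-linearly since $L_0\subseteq K$ is $G$-fixed, and that $\Phi_r$ and $\gamma_{r,L}^{-1}$ are functorial in the base) is exactly the content summarized in the paper's one-sentence argument.
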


  \begin{proof}
   This follows by exactly the same proof as in \emph{op.cit.}: the proof proceeds by constructing compatible maps between the various variants of crystalline, de Rham, and Hyodo--Kato cohomology, and these all remain compatible after extending scalars to $L$.
  \end{proof}

  \begin{remark}
   We are principally interested in the case $L = K$, but it seems to be easier to prove base-change compatibility in $K$ for a fixed $L$. It seems eminently natural that if $K' / K$ is totally ramified and both fields are finite over $\Qp$ then we should get a quasi-isomorphism
   \[ \rgam_{\syn, P}(U, \overline{U}, r)_{K} \rTo^\sim \rgam(G, \rgam_{\syn, P}(T, \overline{T}, r)_{K'}),\]
   but this does not seem to be so easy to prove, and it is not needed for the calculations below.
  \end{remark}

 \subsection{h-sheafification}

  We now sheafify in the $h$-topology. We write $\mathscr{S}_{L, P}(r)$ for the sheafification of $(U, \overline{U}) \mapsto \rgam_{\syn, L, P}(U, \overline{U}, r)$, and we define
  \[ \rgam_{\syn, L, P}(X_h, r) = \rgam(X_h, \mathscr{S}_{L, P}(r)).\]

  \begin{proposition}
   For any arithmetic pair $(U, \overline{U})$ that is fine, log-smooth over $V^\times$ and of Cartier type, the canonical maps
   \[ \rgam_{\syn, L, P}(U, \overline{U}, r) \to \rgam_{\syn, L, P}(U_h, r) \]
   are quasi-isomorphisms.
  \end{proposition}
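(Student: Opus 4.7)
The plan is to reduce the statement to the corresponding $h$-descent results for rational crystalline cohomology and for $\rgam_{\dR}(-)/\Fil^r$, which are established in \cite{NN}, and then observe that both the extension of scalars $L_0 \to L$ and the passage to the mapping fibre preserve quasi-isomorphisms and commute with $h$-sheafification.

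First I would unwind the definition: the presheaf $(U, \overline{U}) \mapsto \rgam_{\syn, L, P}(U, \overline{U}, r)$ is the mapping fibre of a morphism
\[ \rgam_{\cris}(-)_L \rTo \rgam_{\cris}(-)_L \oplus \rgam_{\dR}(-_K)/\Fil^r,\]
and since $h$-sheafification is exact (being a left adjoint applied in a stable $\infty$-categorical setting), $\mathscr{S}_{L,P}(r)$ is canonically the mapping fibre of the corresponding morphism between the $h$-sheafifications of the constituent presheaves. Hence it suffices to prove that for $(U, \overline{U})$ fine, log-smooth and of Cartier type, the canonical morphisms
\[ \rgam_{\cris}(U, \overline{U})_{L} \rTo \rgam(U_h, \mathscr{C}_L) \qquad \text{and} \qquad \rgam_{\dR}(U, \overline{U}_K)/\Fil^r \rTo \rgam(U_h, \mathscr{D}_r)\]
are quasi-isomorphisms, where $\mathscr{C}_L$ and $\mathscr{D}_r$ are the $h$-sheafifications of the respective presheaves.

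Next I would observe that since $L/L_0$ is a finite (flat) extension, $\rgam_{\cris}(U, \overline{U})_L = L \otimes_{L_0} \rgam_{\cris}(U, \overline{U})_{\QQ}$, and the functor $L \otimes_{L_0}(-)$ is exact and commutes with the formation of $\rgam(U_h, -)$ and with sheafification. Thus the crystalline claim is obtained by applying $L \otimes_{L_0}(-)$ to the corresponding $h$-descent statement for $\rgam_{\cris}(U, \overline{U})_{\QQ}$ established in \cite{NN} (the argument there is an application of Beilinson's $h$-descent for crystalline cohomology of log-smooth Cartier-type pairs, together with the identification of the global sections of the sheafified complex with the original). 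The de Rham statement modulo $\Fil^r$ is similarly proved in \emph{op.cit.}, and is unaffected by our modifications since $\rgam_{\dR}(U, \overline{U}_K)/\Fil^r$ is the same object as in \cite{NN}.

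The only real point to check is that the identifications in \cite{NN} are genuinely natural in the pair $(U, \overline{U})$, so that combining them with the (sheafification-preserving) formation of mapping fibres yields the required quasi-isomorphism of $\rgam_{\syn, L, P}(U, \overline{U}, r)$ with $\rgam_{\syn, L, P}(U_h, r)$. I expect this to be the only delicate point, and it is addressed in \emph{op.cit.} precisely by exhibiting the $h$-descent maps at the level of complexes (rather than merely up to quasi-isomorphism in the derived category); given this, the proof reduces to formal manipulations of mapping fibres and scalar extensions, with no new input required beyond the results already cited.
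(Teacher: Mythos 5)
The paper's own proof is a one-liner: it cites Proposition 3.16 of \cite{NN} and asserts that the ``long and highly technical proof'' carries over verbatim to general $P$ (and general $L$). So the paper does not attempt a reduction at all --- it simply reproduces the NN argument with $1 - \varphi$ replaced by $P(\Phi_r)$.

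Your proposed reduction has a genuine gap. You propose to deduce the result from $h$-descent for the two constituent presheaves $\rgam_{\cris}(-)_L$ and $\rgam_{\dR}(-)/\Fil^r$, invoking exactness of sheafification and flatness of $L/L_0$. The formal part of this (sheafification and $\rgam(U_h,-)$ preserve fibre sequences, $L\otimes_{L_0}(-)$ is exact and finite) is fine. The problem is the input: $h$-descent for absolute log-crystalline cohomology $\rgam_{\cris}(U,\overline{U})_{\QQ}$ (as a presheaf on arithmetic pairs) is not established in \cite{NN}, and is not what their Proposition 3.16 proves. Absolute crystalline cohomology over $W(k)$ of a log-smooth pair over $V^\times$ depends on the compactification and on the ramified base in a way that does not behave well under $h$-sheafification; this is precisely why \Ne--\Ni pass to (Beilinson's) Hyodo--Kato cohomology. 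Their proof of 3.16 first uses the quasi-isomorphism of their Proposition 3.7 --- which is exactly the preceding Proposition \ref{prop:quasiisom} in this paper --- to replace the crystalline/de Rham mapping fibre with a complex built out of $\rgam_{HK}^B$ and $\rgam_{\dR}$, and only then applies Beilinson's $h$-descent for Hyodo--Kato and (filtered) de Rham cohomology. The crystalline presheaf is never shown to descend on its own. So the step in your proposal reading ``the crystalline claim is obtained by applying $L\otimes_{L_0}(-)$ to the corresponding $h$-descent statement for $\rgam_{\cris}(U,\overline{U})_{\QQ}$ established in \cite{NN}'' does not point to a result that exists. A corrected version of your strategy would first invoke Proposition \ref{prop:quasiisom} (and its sheafified analogue, which requires knowing that the $\alpha'_{\pi,P,L}$ are compatible with restriction maps), then run the formal argument for the Hyodo--Kato and de Rham constituents --- which is, at that point, essentially what the NN proof does.
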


  \begin{proof}
   This is exactly the generalization to our setting of Proposition 3.16 of \emph{op.cit.}. The long and highly technical proof fortunately carries over verbatim to general $P$.
  \end{proof}

  \begin{proposition}
   There is a ``$P$-syntomic descent spectral sequence''
   \[ E_2^{ij} = H^i_{\st, L, P}(D^j(r)) \Rightarrow H^{i+j}_{\syn, L, P}(X_h, r), \]
   where $D^j(r) = \DD_{\mathrm{pst}}(H^j(X_{\overline{K}, \et}, \Qp(r)))$ as above. This spectral sequence is compatible with extension of $L$ (where defined) and change of $P$. Moreover, it is compatible with cup-products, where the cup-product on the terms $E_2^{ij}$ is given by the construction of \S \ref{sect:cupprod1}.
  \end{proposition}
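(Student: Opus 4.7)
The plan is to construct the spectral sequence by exactly the same route as \cite[Proposition 3.18]{NN}, exhibiting $\rgam_{\syn, L, P}(X_h, r)$ as the total complex of a double complex of $h$-sheaves whose columnwise filtration has the prescribed $E_2$-terms.

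First I would sheafify Proposition \ref{prop:quasiisom}: after choosing a uniformizer of $V$, this identifies $\mathscr{S}_{L,P}(r)$ with the total complex of an explicit double complex of $h$-sheaves built from Beilinson's Hyodo--Kato and de Rham complexes (the former tensored to $L$, the latter taken mod $\Fil^r$), with horizontal maps involving $P(\Phi_r)$ and $P(\Phi_{r-1})$ and vertical maps given by the monodromy operator $N$. Applying $\rgam(X_h, -)$ termwise and filtering by columns produces a hypercohomology spectral sequence converging to $H^{\ast}_{\syn, L, P}(X_h, r)$.

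The $E_2$-page is then identified via Beilinson's computations cited in \cite[\S 4]{NN}: the $h$-cohomology of the sheafified Hyodo--Kato complex is $D^j(X_h)_{\st}$ (and hence equal to $D^j(X_h)_{\st, L}$ after applying $-\otimes_{L_0} L$), while that of the sheafified de Rham complex modulo $\Fil^r$ is $D^j(X_h)_K/\Fil^r D^j(X_h)_K$. The twist by $\Phi_r = q^{-r}\Phi$ in the horizontal direction corresponds to the Tate twist $(r)$ on $D^j(X_h)$, so the columns of the $E_1$-page identify with the terms of the complex $C^\bullet_{\st, L, P}(D^j(X_h)(r))$ with matching differentials. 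Taking column cohomology yields $E_2^{ij} = H^i_{\st, L, P}(D^j(X_h)(r))$, and this identification is intrinsic—the dependence on the uniformizer washes out at $E_2$.

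The compatibilities with change of $L$ and of $P$ follow immediately from the corresponding morphisms of double complexes furnished by Proposition \ref{prop:fptheoryproperties}, which visibly respect the column filtration. The main obstacle is the cup-product compatibility: one must show that the $E_\infty$-algebra structure on $\bigoplus_{r, P}\rgam_{\syn, L, P}(X_h, r)$ induces on $E_2$ exactly the pairing tabulated in Section \ref{sect:cupprod1}. Both pairings are assembled from the same data—cup products on Hyodo--Kato and de Rham cohomology, a polynomial identity $a(T_1, T_2) P_1(T_1) + b(T_1, T_2) P_2(T_2) = (P_1 \star P_2)(T_1 T_2)$, and an interpolation parameter $\lambda$—so the identification is essentially formal once the sheaf-level cup product is written out, although the sign and degree bookkeeping is delicate and parallels \cite[\S 4]{Bes97} in the good-reduction case.
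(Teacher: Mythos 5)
Your proposal is correct and follows essentially the same route as the paper, which simply cites \cite[Proposition 3.17]{NN} for the existence of the spectral sequence and observes that cup-product compatibility is immediate from the definition of the product on $P$-syntomic cohomology. You have filled in the construction (sheafifying the double complex from Proposition \ref{prop:quasiisom}, filtering by columns, identifying the $E_1$- and $E_2$-pages via Beilinson's computations) that the paper leaves implicit by reference to \emph{op.cit.}, and your observation that the dependence on the uniformizer washes out at $E_2$ is a point worth having made explicit.
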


  \begin{proof}
   For the existence of the spectral sequence, see Proposition 3.17 of \emph{op.cit.}. The compatibility with cup-products is immediate from the definition of the cup-product on $P$-syntomic cohomology (see \S \ref{sect:cupprod} below for explicit formulae).
  \end{proof}

  Note that the spectral sequence implies that the cohomology groups $H^{i}_{\syn, L, P}(X_h, r)$ are zero if $i > 2\dim X + 2$, and if $K / \Qp$ is a finite extension, then the cohomology groups are finite-dimensional $L$-vector spaces (with dimension bounded independently of $P$). Moreover, the spectral sequence obviously degenerates at $E_3$, and this gives a 3-step filtration on the groups $H^{i}_{\syn, L, P}(X_h, r)$:

  \begin{definition}
   We write $\Fil^m H^{i}_{\syn, L, P}(X_h, r)$ for the 3-step decreasing filtration on $H^{i}_{\syn, L, P}(X_h, r)$ induced by the $P$-syntomic spectral sequence. Concretely, we have
   \begin{align*}
    \Fil^0 / \Fil^1 &= \ker \left( H^i_{HK}(X_h)_L^{P(\Phi_r) = 0, N = 0} \cap \Fil^r H^i_{\dR}(X_h) \rTo \frac{H^{i-1}_{HK}(X_h)_L}{\operatorname{im} P(\Phi_{r-1}) + \operatorname{im}(N)}\right), \\
    \Fil^1 / \Fil^2 &= \frac{ \left\{ (x, y, z) \in H^{i-1}_{HK}(X_h)_L \oplus H^{i-1}_{HK}(X_h)_L \oplus H^{i-1}_{\dR}(X_h) / \Fil^r : Nx = P(\Phi_{r-1})y\right\}}
    {\left\{(P(\Phi_r) x, Nx, \iota_{\dR}(x): x \in H^{i-1}_{HK}(X_h)_L\right\}},\\
    \Fil^2 / \Fil^3 &= \operatorname{coker}\left( H^{i-1}_{HK}(X_h)_L^{P(\Phi_r) = 0, N = 0} \cap \Fil^r H^{i-1}_{\dR}(X_h) \rTo \frac{H^{i-2}_{HK}(X_h)_L}{\operatorname{im} P(\Phi_{r-1}) + \operatorname{im}(N)}\right).
   \end{align*}
  \end{definition}

  \begin{remark}
   It seems natural to conjecture that the ``knight's move'' maps
   \[ H^0_{\st, L, P}(D^j(X_h)(r)) \to H^2_{\st, L, P}(D^{j-1}(X_h)(r))\]
   should be zero for smooth proper $X$, extending the conjecture for syntomic cohomology formulated in Remark 4.10 of \cite{NN}. We do not know if this conjecture holds in general, but the applications below will all concern cases where either the source or the target of this map is zero.
  \end{remark}

%%%%%%%%%%%%%%%%%%%%%%%%%%%%%%%%%%%%%%%%%%%%%%%%%

  \subsection{Explicit cup product formulae}
   \label{sect:cupprod}

  We now use Proposition \ref{prop:quasiisom} to give an explicit formula for the cup product
  \[ H^i_{\syn, L, P}(X_h, r) \times H^j_{\syn, L, Q}(X_h,s) \rTo H^{i+j}_{\syn, L, P\star Q}(X_h,r+s), \]
  which generalizes the description of the cup-product on the complexes $C^\bullet_{\st, L, P}(D)$ given in the previous section.

  A class $\eta\in H^i_{\syn,P}(X_h, r)$ is represented by the following data:
  \begin{align*}
   u&\in \rgam_{HK}^{B,i}(X_h)_{L},& v &\in \Fil^r \rgam_{\dR}^{i}(X_h), \\
   w,x&\in  \rgam_{HK}^{B,i-1}(X_h)_{L},& y &\in \rgam_{\dR}^{i-1}(X_h)_{L}, \\
   z&\in \rgam_{HK}^{B,i-2}(X_h)_{L}
  \end{align*}
  which satisfy the relations
  \begin{align*}
   \mathrm{d} u &= 0, & \mathrm{d}v &= 0,\\
   dw &=P(\Phi)u, &  dx & =N u, & \mathrm{d}y &= \iota^B_{\dR}(u) - v,\\
   dz & =Nw- P(q\Phi)x.
  \end{align*}

  Let $\eta'=[u', v'; w',x',y'; z']$ be a corresponding representation of $\eta' \in H^j_{\syn, L, Q}(X_h,s)$. We want to find explicit formulae for the class $\eta''=\eta\cup\eta'$.

  As before, fix polynomials $a(t,s)$ and $b(t,s)$ such that
  \[ (P\star Q)(ts)=a(t,s)P(t)+b(t,s)Q(s),\]
  and $\lambda \in K$.

  \begin{proposition}
   \label{prop:cupprod}
   The cup-product is given by $\eta''=[u'', v''; w'', x'', y''; z'']$, where
   \begin{align*}
    u'' & = u \cup u',\\
    v'' &= v \cup v',\\
    w'' & = a(\Phi,\Phi)(w\cup u')+(-1)^i b(\Phi,\Phi)(u\cup w'),\\
    x'' & = (x\cup u')+(-1)^i (u \cup x'),\\
    y'' &= y \cup \iota_{\dR}^B\Big((1 - \lambda) u' + \lambda v'\Big) + (-1)^i \iota_{\dR}^B\Big( \lambda u + (1 - \lambda) v\Big) \cup y',\\
    w'' & = a(q\Phi,\Phi)(z \cup u') - (-1)^{i} a(\Phi,q\Phi)(w \cup x') + (-1)^i b(q\Phi,\Phi)(x\cup w') - b(\Phi,q\Phi)(u \cup z').
   \end{align*}
   Here, we write e.g.~$a(\Phi,\Phi)(w\cup u')$ for the image of $a(\Phi_1,\Phi_2)(w \otimes u') \in \rgam_{HK}^{B, i}(X_h)_L \otimes \rgam^{B, j-1}_{HK}(X_h)_L$ under the cup product map into $\rgam_{HK}^{B, i + j - 1}(X_h)_L$, where $\Phi_1$ and $\Phi_2$ are the $q$-power Frobenius maps of the two factors.
  \end{proposition}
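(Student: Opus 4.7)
The plan is to define the cup-product on the target of $\alpha'_{\pi,P,L}$ (the explicit double complex of Proposition \ref{prop:quasiisom}) and transport it back along this quasi-isomorphism. The \Ne--\Ni proof of that quasi-isomorphism is functorial in the coefficient complexes and works with $P(\Phi_r)$ in place of $1 - \Phi_r$ without change, so $\alpha'_{\pi,P,L}$ can be arranged to respect cup-products, and the problem reduces to writing down and verifying formulas on the target.

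Regarding the target as a two-step mapping fibre (horizontally $(P(\Phi_r),\iota_{\dR}^B)$, vertically $N$), the cup-product is assembled from three ingredients: the naive cup-products on the HK and de Rham corners; the polynomials $a(t,s), b(t,s)$ with $a(t,s)P(t) + b(t,s)Q(s) = (P \star Q)(ts)$, which provide the homotopies turning $P(\Phi) \otimes 1$ and $1 \otimes Q(\Phi)$ into $(P \star Q)(\Phi)$; and the scalar $\lambda \in K$ which splits the de Rham component across the two symmetric choices of comparison. This is exactly the recipe used in the cup-product table of Section \ref{sect:cupprod1}, and it produces by direct formal manipulation the six listed components of $(u'',v'';w'',x'',y'';z'')$. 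It then remains to check the cocycle identities. Those for $u''$ and $v''$ are the Leibniz rule for the HK (resp.\ de Rham) cup-product; $\mathrm{d} x'' = N u''$ uses that $N$ is a graded derivation for the HK cup-product; $\mathrm{d} y'' = \iota_{\dR}^B(u'') - v''$ follows from $\mathrm{d} y = \iota_{\dR}^B(u) - v$ and $\mathrm{d} y' = \iota_{\dR}^B(u') - v'$ via the telescoping identity that makes the $\lambda$-dependent cross-terms cancel; $\mathrm{d} w'' = P(\Phi_r) u''$ uses $aP + bQ = P \star Q$ evaluated at $(\Phi_r, \Phi_s)$; and $\mathrm{d} z'' = N w'' - (P \star Q)(q\Phi_{r+s}) x''$ uses the same identity with $q\Phi$ substituted on one factor, combined with the commutation $N\Phi = q\Phi N$ needed to move $N$ past the various $\Phi$'s occurring in $a$ and $b$.

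The main obstacle is the sign bookkeeping. The $(-1)^i$ factors in $w'',x'',y'',z''$ are dictated by the Koszul rule for cocycles of degree $i$ in the mapping-fibre direction, and to track them correctly through two nested fibre constructions I would first verify the $N = 0$ case (where the formulas collapse to the table of Section \ref{sect:cupprod1} tensored with the HK--de Rham cup products), and then add back the $N$-contributions and check that each sign agrees with the pattern imposed by the vertical fibre. Once the signs are correct, every identity reduces to the polynomial relation $aP + bQ = P \star Q$ together with the derivation properties of $N$, $\iota_{\dR}^B$, and the underlying HK and de Rham cup products, so no new conceptual input is needed beyond what was already established in Section \ref{sect:cupprod1}.
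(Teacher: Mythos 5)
Your proposal is correct and takes essentially the same approach as the paper: the paper's own proof is the one-line observation that these formulae are ``simply a translation of the formulae of \S\ref{sect:cupprod1} into the setting of complexes'', i.e.\ precisely the transport along $\alpha'_{\pi,P,L}$ of the $(a,b,\lambda)$-recipe from the cup-product table for $C^\bullet_{\st,L,P}$, with the $(-1)^i$ Koszul signs added because one is now working with chain-level cup products on the Hyodo--Kato and de Rham complexes. The cocycle verifications and sign checks you describe are exactly the content being elided by the word ``translation''.
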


  \begin{proof}
   This is simply a translation of the formulae of \S \ref{sect:cupprod1} into the setting of complexes.
  \end{proof}

%%%%%%%%%%%%%%%%%%%%%%%%%%%%%%%%%%%%%%%%%%%%%%%%%

 \subsection{Pushforward maps and compact supports}

  We now extend to our setup the constructions of D\'eglise's Appendix B to \cite{NN}.

  \begin{theorem}
   \label{thm:pushfwd}
   Let $f: X \to Y$ be a smooth proper morphism of smooth $K$-varieties, of relative dimension $d$. Then there are pushforward maps
   \[ f_*: H^i_{\syn, L, P}(X_h, r) \to H^{i + 2d}_{\syn, L, P}(X_h, r + d),\]
   which are compatible with change of $L$ and change of $P$ in the obvious sense.
  \end{theorem}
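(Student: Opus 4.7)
The plan is to follow closely the construction of D\'eglise's Appendix~B to \cite{NN}, which produces pushforwards for the original syntomic cohomology $\rgam_{\syn}(-,r)_{\QQ}$ of \Ne--\Ni. The input data for that construction are: a pushforward map on Hyodo--Kato (or crystalline) cohomology compatible with Frobenius; the analogous map on filtered de Rham cohomology, shifted by $2d$ and with a shift of filtration by $d$; and compatibility between them via the Hyodo--Kato comparison isomorphism $\iota_{\dR}^B$. All three ingredients are already supplied by op.~cit.\ at the $\Qp$-linear level, and extend to the $L$-linear setting simply by scalar extension from $\Qp$ to $L$ on each complex.

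The new ingredient needed to adapt the construction to $P$-syntomic cohomology is that $f_*$ intertwines $P(\Phi_r)$ on the source with $P(\Phi_{r+d})$ on the target. The essential fact used in op.~cit.\ for the special case $P(T) = 1-T$ is the Tate-twist relation on crystalline cohomology $\varphi \circ f_* = p^{d} f_* \circ \varphi$ (equivalently, $\Phi \circ f_* = q^{d} f_* \circ \Phi$ with $\Phi = \varphi^f$), which with our normalisation $\Phi_r = q^{-r}\Phi$ becomes
\[ \Phi_{r+d} \circ f_* = f_* \circ \Phi_r. \]
This commutation passes immediately to polynomials, giving $P(\Phi_{r+d}) \circ f_* = f_* \circ P(\Phi_r)$ for any $P \in 1 + T L[T]$. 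Combined with the compatibility of $f_*$ with $\iota_{\dR}^B$ and with the filtration shift on de Rham cohomology, this produces the desired map of mapping fibres
\[ f_*: \rgam_{\syn,L,P}(X_h, r) \to \rgam_{\syn,L,P}(Y_h, r+d)\,[2d]. \]

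Compatibility with change of $L$ is automatic, since each ingredient complex changes by scalar extension and $f_*$ is defined at the $\Qp$-linear level. Compatibility with change of $P$ to $PQ$ (Proposition~\ref{prop:fptheoryproperties}(2)) similarly reduces to the observation that $Q(\Phi_r)$ commutes with $f_*$ up to the Tate twist, so the morphism of $P$-syntomic complexes constructed there lifts to a morphism compatible with $f_*$.

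The main substantive obstacle is one of technical bookkeeping. D\'eglise's pushforward is not defined naively as a map of mapping fibres, but via a trace morphism together with Gysin-type resolutions on the $h$-site, in a way which respects the $E_\infty$-algebra structure on syntomic cohomology. One must verify that every step of this construction is preserved when $1-\Phi_r$ is replaced by $P(\Phi_r)$. Fortunately, the only property of $1-\Phi_r$ used in op.~cit.\ is its commutation with $f_*$ up to the Tate twist, a property shared by any polynomial in $\Phi_r$; consequently the argument of op.~cit.\ transcribes verbatim, with $P(\Phi_r)$ taking the place of $1-\Phi_r$ throughout.
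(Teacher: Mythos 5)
The commutation identity $P(\Phi_{r+d}) \circ f_* = f_* \circ P(\Phi_r)$ you isolate is correct and is one necessary ingredient, but the claim that ``the argument of op.~cit.\ transcribes verbatim, with $P(\Phi_r)$ taking the place of $1-\Phi_r$ throughout'' conceals the actual difficulty. D\'eglise's construction of Gysin morphisms runs through the formalism of oriented Tate $\Omega$-spectra on the $h$-site: one needs bonding maps of the form $(\text{Tate twist}) \otimes \mathscr{S}(r) \to \mathscr{S}(r+1)$ landing back in the \emph{same} tower of sheaves. In the \Ne--\Ni case these are the cup products $\mathscr{S}(1)\otimes\mathscr{S}(r)\to\mathscr{S}(r+1)$, which close up precisely because $(1-T)\star(1-T)=1-T$. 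If you substitute $P(\Phi_r)$ for $1-\Phi_r$ uniformly, the cup product lands in $\mathscr{S}_{L,P\star P}(r+1)$, and since $P\star P\neq P$ for general $P$, the direct sum $\bigoplus_r\mathscr{S}_{L,P}(r)$ is not a ring spectrum and the verbatim transcription fails at exactly the step that produces the spectrum structure underlying the Gysin formalism.

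The resolution, which your proposal does not supply, is to take the Tate twist to be $\mathscr{S}_L(1)$ defined with the \emph{fixed} polynomial $1-T$, not with $P$. Since $(1-T)\star P = P$, cup product gives maps $\mathscr{S}_L(1)\otimes\mathscr{S}_{L,P}(r)\to\mathscr{S}_{L,P}(r+1)$ staying inside the $P$-theory, so that $\bigoplus_r\mathscr{S}_{L,P}(r)$ acquires the structure of a Tate $\Omega$-spectrum (equivalently a module spectrum over the $1-T$ ring spectrum), and the D\'eglise--Mazzari pushforward machinery then applies. Your proposal also needs, but does not mention, that the sheaves $\mathscr{S}_{L,P}(r)$ are $h$-local and $\mathbf{A}^1$-local and satisfy the projective bundle theorem — the other prerequisites for that formalism. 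These do follow readily from the analogous statements for the underlying Hyodo--Kato and de Rham complexes, but without the module-over-$\mathscr{S}_L$ observation the argument does not close up.
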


  \begin{proof}
   Each of the sheaves $\mathscr{S}_{L, P}(r)$ is $h$-local, by definition. They are also $\mathbf{A}^1$-local: this is virtually immediate from the corresponding result for the underlying Hyodo--Kato and de Rham cohomology theories, as in Prop 5.4 of \emph{op.cit.}. The same methods also yield the projective bundle theorem.

   Hence we can consider the direct sum of $h$-sheaves given by
   \[ \mathscr{S}_{L, P} := \bigoplus_{r} \mathscr{S}_{L, P}(r). \]
   Cup-product gives us maps $\mathscr{S}_L(1) \otimes \mathscr{S}_{L, P}(r) \to \mathscr{S}_{L, P}(r+1)$, where $\mathscr{S}_L(1)$ is defined using the polynomial $1 - T$; this gives the direct sum $\mathscr{S}_{L, P}$ the structure of a Tate $\Omega$-spectrum. The same argument as in \emph{op.cit.} now gives pushforward maps (and the projection formula for these holds by construction). It is clear that this argument is compatible with change of $L$ and of $P$.
  \end{proof}

  The same argument also gives compactly-supported cohomology complexes:

  \begin{theorem}
   There exist compactly-supported cohomology complexes $\rgam_{\syn, L, P, c}(X_h, r)$, contravariantly functorial with respect to proper morphisms and covariantly functorial (with a degree shift) with respect to smooth morphisms; and there is a functorial morphism
   \[ \rgam_{\syn, L, P, c}(X_h, r) \to \rgam_{\syn, L, P}(X_h, r)\]
   which is an isomorphism for proper $X$.

   The compactly-supported cohomology has a descent spectral sequence
   \[ E^2_{ij} = H^i_{\st, L, P}(D^j_c(X_h)(r)) \Rightarrow H^{i + j}_{\syn, L, P, c}(X_h, r)\]
   where $D^j_c(X_h) = \DD_{\mathrm{pst}}(H^i_{\et, c}(X_{\overline{K}, \et}, \Qp))$.
  \end{theorem}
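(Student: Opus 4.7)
The plan is to follow the template of the proof of Theorem \ref{thm:pushfwd} essentially verbatim. That proof already exhibited $\mathscr{S}_{L, P} = \bigoplus_r \mathscr{S}_{L, P}(r)$ as an oriented Tate $\Omega$-spectrum of $h$-sheaves on $K$-varieties, placing us within the scope of the Cisinski--D\'eglise motivic six-functor formalism that D\'eglise invokes in his appendix to \cite{NN}. That formalism furnishes exceptional functors $Rf_!$ and $Rf^!$ for every separated, finite-type morphism between $K$-varieties. I would then define
\[ \rgam_{\syn, L, P, c}(X_h, r) := R\pi_! \mathscr{S}_{L, P}(r), \]
where $\pi \colon X \to \Spec K$ is the structure map, and take global sections.

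With this definition the stated functorialities and the comparison map are formal consequences of six-functor yoga. Proper base change (equivalently $f_! = f_*$ for proper $f$) supplies the functoriality with respect to proper morphisms and shows that the natural transformation $R\pi_! \to R\pi_*$, which is the source of the comparison map $\rgam_{\syn, L, P, c}(X_h, r) \to \rgam_{\syn, L, P}(X_h, r)$, is an isomorphism when $X$ is proper over $K$. For smooth $f$ of relative dimension $d$, the purity isomorphism $f^! \simeq f^*(d)[2d]$ produces the map with the stated degree shift.

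For the descent spectral sequence, I would apply $R\pi_!$ directly to the defining two-term presentation of $\mathscr{S}_{L, P}(r)$ as the mapping fibre of $(P(\Phi_r), \gamma_{r, L}^{-1})$. Since both component realizations---Beilinson's Hyodo--Kato cohomology and filtered de Rham cohomology---are themselves motivic Tate spectra within their own six-functor formalism, $R\pi_!$ applied componentwise is computed by the corresponding compactly-supported Hyodo--Kato and de Rham complexes equipped with the usual Frobenius, monodromy and filtration. Taking cohomology in the geometric degree $j$ produces filtered $(\varphi, N, G_K)$-modules which, by the fundamental comparison theorem for compactly-supported cohomology, are precisely $D^j_c(X_h) = \DD_{\mathrm{pst}}(H^j_{\et, c}(X_{\overline{K}, \et}, \Qp))$. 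The standard filter-by-degree spectral sequence attached to the resulting mapping fibre then yields the $E_2$-page
\[ E_2^{ij} = H^i_{\st, L, P}(D^j_c(X_h)(r)), \]
by exactly the argument used in Proposition 3.17 of \cite{NN}.

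The principal technical obstacle is the identification of D\'eglise's abstract $R\pi_!$ of the Hyodo--Kato and de Rham realizations with the concrete Beilinson-style compactly-supported versions of those theories, together with checking that the Frobenius $\Phi$ and monodromy $N$ commute with $R\pi_!$ in the expected way, so that $P(\Phi_r)$ and $N$ act on the compactly-supported complexes. Once these compatibilities are in hand---and they are essentially contained in the interplay between the realizations of \cite{NN} and the tensor structure of the six-functor formalism---everything else reduces to formalities and to the arguments already carried out in the non-compactly-supported case.
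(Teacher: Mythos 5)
Your proposal is correct and fills in exactly the route the paper intends: the paper offers no written proof beyond the remark that the same six-functor/Tate $\Omega$-spectrum argument used for Theorem \ref{thm:pushfwd} also yields compactly-supported complexes. Your elaboration via $R\pi_!$, proper base change, purity, and termwise application of $\pi_!$ to the mapping-fibre presentation is precisely that argument, and you correctly flag the sole genuine step---matching the abstract $\pi_!$ with the concrete compactly-supported Hyodo--Kato and de Rham realizations, compatibly with $\Phi$ and $N$---as the technical crux.
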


  Finally, we have a projection formula:

  \begin{theorem}
   \label{thm:cupprod}
   There are cup-products
   \[ \rgam_{\syn, L, P, c}(X_h, r) \times \rgam_{\syn, L, Q}(X_h, s) \to \rgam_{\syn, L, P\star Q, c}(X_h, r + s), \]
   and for $f$ a smooth proper morphism, we have the projection formula
   \[ f_*(\alpha) \cup \beta = f_*(\alpha \cup f^* \beta). \]
  \end{theorem}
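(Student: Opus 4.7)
The plan is to closely follow the argument of D\'eglise's Appendix B to \cite{NN}, extended to our setting where the weight $r$ and the polynomial $P$ both vary. A key observation is that the polynomial $P_0(T) = 1 - T$ is a unit for the convolution $\star$ (its only root is $1$, so $(P_0 \star Q)(T) = Q(T)$), so that the direct sum $\mathscr{T}_L := \bigoplus_{r, P} \mathscr{S}_{L, P}(r)$ is an $E_\infty$-algebra in $h$-sheaves with unit $\mathscr{S}_L(0)$, bigraded by the monoid $\mathbf{Z}_{\ge 0} \times (1 + TL[T], \star)$, via the cup-products of Proposition \ref{prop:cupprod}. Each $\mathscr{S}_{L, P}(r)$ is then naturally a module over the Tate $\Omega$-spectrum $\mathscr{S}_L$ used in the proof of Theorem \ref{thm:pushfwd}.

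To define the cup-product with ordinary cohomology, I would use the description of compactly-supported cohomology via a compactification $j: X \into \overline{X}$: one has $\rgam_{\syn, L, P, c}(X_h, r) \simeq \rgam(\overline{X}_h, j_! j^* \mathscr{S}_{L, P}(r))$, and then the projection formula for the open immersion $j$ gives a natural map
\[ j_!\bigl(j^* \mathscr{S}_{L, P}(r)\bigr) \otimes \mathscr{S}_{L, Q}(s) \longrightarrow j_!\bigl(j^* \mathscr{S}_{L, P}(r) \otimes j^* \mathscr{S}_{L, Q}(s)\bigr), \]
which, composed with the $\mathscr{T}_L$-multiplication $\mathscr{S}_{L, P}(r) \otimes \mathscr{S}_{L, Q}(s) \to \mathscr{S}_{L, P \star Q}(r+s)$, yields the desired pairing on global sections. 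The projection formula for a smooth proper $f$ is then formal from the module-structure projection formula built into the six-functor formalism, since in D\'eglise's setup $f_*$ is constructed from the Thom class of $f$ in a manner compatible with the $\mathscr{T}_L$-action.

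The main technical obstacle is not any single step but rather the bookkeeping needed to verify that D\'eglise's formalism, usually set up for a fixed coefficient algebra spectrum, extends to our bigraded situation where the coefficient polynomial is shifted by $\star$-convolution under cup-product. Concretely, one must check that the cup-product, pushforward, and change-of-$P$ maps are mutually compatible; this ultimately reduces, via Proposition \ref{prop:fptheoryproperties}(2) and the explicit cup-product formulae of Proposition \ref{prop:cupprod}, to elementary identities between polynomials in $\Phi$. Granted these compatibilities, the rest is a verbatim adaptation of the arguments in \emph{op.cit.}
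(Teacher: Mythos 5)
Your proposal is correct and follows essentially the same route as the paper, which simply defers to the D\'eglise--Mazzari formalism with a one-line citation; you have filled in the mechanics (the $\star$-unit observation for $1-T$, the module structure over the Tate $\Omega$-spectrum $\mathscr{S}_L$, and the $j_!j^*$ description of compactly-supported cohomology) that the paper leaves implicit.
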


  \begin{proof}
   This follows from the construction of the pushforward map, cf.~\cite{DM12}.
  \end{proof}

%%%%%%%%%%%%%%%%%%%%%%%%%%%%%%%%%%%%%%%%%%%%%%%%%

 \section{Application to computation of regulators}

%%%%%%%%%%%%%%%%%%%%%%%%%%%%%%%%%%%%%%%%%%%%%%%%%

 We'll now apply the constructions above to give formulae describing the syntomic regulator map $H^i_\mathrm{mot}(X, j) \to H^i_{\syn}(X_h, j)$, where $X$ is a product of copies of an affine curve $Y$. More specifically, we will give formulae in the following cases:
 \begin{itemize}
  \item $X = Y$ is a single curve and we are given a class in $H^2_\mathrm{mot}(X, 2)$ that is the cup product of two units on $Y$;
  \item $X = Y^2$ is the product of two curves, and we are given a class in $H^3_\mathrm{mot}(X, 2)$ that is the pushforward of a unit along the diagonal embedding $Y \into Y^2$;
  \item $X = Y^3$ is the product of three curves and we consider the class in $H^4_\mathrm{mot}(X, 2)$ given by the cycle class of the diagonal $Y \into Y^3$.
 \end{itemize}
 In the case where $Y$ has a smooth model over $\ZZ_p$ these regulators have been described using finite-polynomial cohomology (in \cite{Bes98b}, \cite{Bes10}, and \cite{Bes97} respectively); and these have been applied in proving explicit reciprocity laws for Euler systems (in \cite{BD1,BDR1,DR1} respectively). We use the generalization of finite-polynomial cohomology to arbitrary varieties described in the preceding section to extend this description to the case of arbitrary smooth curves over $K$. However, our results are less complete, in that we only obtain a full description of the image of the regulator in a ``convenient'' quotient of the cohomology of $X$ in the sense of \S \ref{sect:convenient}. (In other words, we allow $X$ to have bad reduction, but we project to a quotient of its cohomology which looks like the cohomology of a variety with good reduction.)

 \subsection{Syntomic regulators}

  Now let us consider the following general setting: $X$ is a smooth connected $d$-dimensional affine $K$-variety, and $z \in H^{d + 1}_\mathrm{mot}(X, j)$ for some $j$. We have an etale realization map
  \[ r_{\et}: H^{d + 1}_\mathrm{mot}(X, j) \rTo H^{d+1}_{\et}(X_{K, \et}, j).\]
  The Hochschild--Serre descent spectral sequence gives a map
  \[ H^{d + 1}(X_{K, \et}, \Qp(j)) \to H^0(K, H^{d + 1}(X_{\overline{K}, \et}, \Qp(j));\]
  but the latter group is zero (because an affine variety of dimension $d$ has \'etale cohomological dimension $d$) and thus we obtain a map
  \[ H^{d + 1}(X_{K, \et}, \Qp(j)) \to H^1(K, H^d(X_{\overline{K}, \et}, \Qp(j))).\]
  Theorem B of \cite{NN} shows that we have $r_{\et} = \rho_{\syn} \circ r_{\syn}$, where
  \[ r_{\syn}: H^{d + 1}_\mathrm{mot}(X, j) \rTo H^{d+1}_{\syn}(X_{K, h}, j)\]
  is the syntomic realization map, and
  \[ \rho_{\syn}: H^{d+1}_{\syn}(X_{K, h}, j) \to H^{d+1}_{\et}(X_{K, \et}, j) \]
  is the syntomic period morphism of Theorem \ref{thm:NNtheory1} above. Theorem \ref{thm:NNtheory2} shows that we have a diagram
  \begin{diagram}
   H^{d + 1}_\mathrm{syn}(X_{K, h}, j) & \rTo^{\rho_{\syn}} & H^{d + 1}_{\et}(X_{K, \et}, \Qp(j))\\
   \dTo & & \dTo\\
   H^1_{\st}(D^d(X_h)(j)) & \rTo^{\exp_{\st}} &  H^1(K, H^d(X_{\overline{K}, \et}, \Qp(j)))
  \end{diagram}
  where $D^d(X_h) = \DD_{\mathrm{pst}}\left(H^d(X_{\overline{K}, \et}, \Qp)\right)$, the vertical maps are induced by the syntomic and Hochschild--Serre descent spectral sequences, and the bottom horizontal map is the generalized Bloch--Kato exponential for the de Rham $G_K$-representation $H^d(X_{\overline{K}, \et}, \Qp(j))$.

  Now let $D$ be a quotient of $D^d(X_h)(j)$ (in the category of weakly-admissible $(\varphi, N, G_K)$-modules) which is crystalline and convenient, in the sense of \S \ref{sect:convenient} above. Then we have isomorphisms
  \[ H^1_{\st}(D) \cong H^1_{\st, K, 1 - T}(D) \cong D_K / \Fil^0 = (\Fil^0 D^*(1)_K )^*,\]
  where $D^*(1)$ is the Tate dual of $D$, which is a submodule of $D^d_{c}(X_h)(d + 1 - j)$. As we saw above, for any class $\eta \in \Fil^0 D^*(1)_K$, we may choose a polynomial $P$ such that $\eta \in H^0_{\st, K, P}(D^*(1))$ and $P(1) \ne 0$, $P(q^{-1}) \ne 0$; and the natural perfect pairing $\left(D_K / \Fil^0\right) \times \left(\Fil^0 D^*(1)_K\right) \to K$ coincides with the cup-product
  \[ H^1_{\st, K}(D) \times H^0_{\st, K, P}(D^*(1)) \to H^1_{\st, K, P}(\Qp(1)) \cong K.\]

  We now relate this to cup-products in $P$-syntomic cohomology.

  \begin{definition}
   If $P(1) \ne 0$ and $P(q^{-1}) \ne 0$, then we denote by
   \[ \tr_{X, \syn, K, P} : \frac{H^{2d + 1}_{\syn, X, P, c}(X_h, d + 1)}{\Fil^2} \rTo^\cong H^1_{\st, K, P}(\Qp(1)) \cong K \]
   the isomorphism given by the descent spectral sequence and Proposition \ref{prop:Qp1} above.
  \end{definition}

  \begin{theorem}
   \label{thm:ajcupprod}
   Let $D$ be a convenient crystalline quotient of $D^d(X_h)(j)$, and let $\eta \in \Fil^0 D^*(1)_K \subseteq \Fil^{d + 1 - j} H^d_{\dR, c}(X_h)$. If
   \[ \operatorname{pr}_D: H^{d + 1}_{\syn}(X_h, j) \rTo H^1_{\st}(D^d(X_h)(j)) \rTo H^1_{\st}(D) \cong D_K / \Fil^0\]
   is the natural projection, then for any $z \in H^{d + 1}_{\mathrm{mot}}(X, j)$, any polynomial $P$ such that $P(\Phi)(\eta) = 0$ and $P(1) \ne 0$, $P(q^{-1}) \ne 0$, and any  class
   \[ \tilde\eta \in H^d_{\syn, K, P, c}(X_h, d + 1 - j)\]
   lifting $\eta$, we have
   \[ \eta\left(\operatorname{pr}_D(r_{\syn}(z))\right) = \tr_{X, \syn, K, P}\left(r_{\syn, K}(z) \cup \tilde\eta\right)\]
   where $r_{\syn, K}(z)$ is the image of $r_\syn(z)$ under the natural map $H^1_{\st}(D^d(X_h)(j)) \to H^1_{\st, K, 1 - T}(D^d(X_h)(j))$.
  \end{theorem}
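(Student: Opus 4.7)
The plan is to compute $\tr_{X, \syn, K, P}(r_{\syn, K}(z) \cup \tilde\eta)$ by passing to the associated graded of the $P$-syntomic descent spectral sequence, and then to invoke Proposition \ref{prop:cupprod1} to identify the resulting $E_2$-level cup-product with the evaluation of $\eta$.

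First I note that, because $X$ is smooth affine of dimension $d$, both $H^{d+1}(X_{\overline{K}, \et}, \Qp(j))$ and $H^{2d+1}_c(X_{\overline{K}, \et}, \Qp(d+1))$ vanish; consequently the descent spectral sequences identify $r_{\syn, K}(z)$ with a class in $H^1_{\st, K, 1-T}(D^d(X_h)(j))$, $\tilde\eta$ with $\eta \in H^0_{\st, K, P}(D^d_c(X_h)(d+1-j))$, and $\tr_{X, \syn, K, P}$ with the composition
\[ H^{2d+1}_{\syn, K, P, c}(X_h, d+1)/\Fil^2 \cong H^1_{\st, K, P}(D^{2d}_c(X_h)(d+1)) \cong H^1_{\st, K, P}(\Qp(1)) \cong K, \]
where the second isomorphism comes from the \'etale trace $D^{2d}_c(X_h)(d+1) \cong \Qp(1)$ and the third is Proposition \ref{prop:Qp1}. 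By the stated compatibility of $P$-syntomic cup-products with the descent spectral sequence, the image of $r_{\syn, K}(z) \cup \tilde\eta$ in $\Fil^1/\Fil^2$ equals the $E_2$-cup-product of the two classes above, living in $H^1_{\st, K, P}(D^d(X_h)(j) \otimes D^d_c(X_h)(d+1-j))$, and then pushed forward via the \'etale Poincar\'e trace to $H^1_{\st, K, P}(\Qp(1))$.

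Since $\eta$ factors through the subobject $D^*(1) \hookrightarrow D^d_c(X_h)(d+1-j)$, and the Poincar\'e trace restricts to the Tate duality pairing $D \otimes D^*(1) \to \Qp(1)$, the computation reduces to the $E_2$-cup-product
\[ H^1_{\st, K, 1-T}(D) \times H^0_{\st, K, P}(D^*(1)) \to H^1_{\st, K, P}(\Qp(1)) \cong K \]
applied to $\operatorname{pr}_D(r_{\syn, K}(z))$ and $\eta$. Since $D$ is convenient and crystalline, Proposition \ref{prop:cupprod1} identifies this with the natural evaluation pairing of $\eta \in \Fil^0 D^*(1)_K = (D_K/\Fil^0)^*$ against the class in $D_K/\Fil^0 \cong H^1_{\st, K, 1-T}(D)$ corresponding to $\operatorname{pr}_D(r_{\syn, K}(z))$. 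The compatibility of the isomorphism (\ref{eq:convenient}) with change of $L$ then identifies this class with $\operatorname{pr}_D(r_{\syn}(z))$, yielding $\eta(\operatorname{pr}_D(r_{\syn}(z)))$ as required.

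The main technical obstacle is verifying the compatibility of the $P$-syntomic trace with the \'etale Poincar\'e trace at the $E_2$-page. This is built into the construction of the pushforward maps of Theorem \ref{thm:pushfwd} via the Tate $\Omega$-spectrum structure on $\bigoplus_r \mathscr{S}_{L, P}(r)$, together with the projection formula of Theorem \ref{thm:cupprod}; but tracking the identifications carefully, in particular the embedding $D^*(1) \hookrightarrow D^d_c(X_h)(d+1-j)$ realized by Poincar\'e duality, is where the detail lies.
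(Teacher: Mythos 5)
Your proof is correct and is essentially a careful unpacking of the paper's own (very terse) argument, which simply cites Proposition \ref{prop:cupprod1} and the compatibility of the $P$-syntomic descent spectral sequence with cup-products. One small misdirection in your final paragraph: the comparison between $\tr_{X, \syn, K, P}$ and the $E_2$-level pairing into $H^1_{\st,K,P}(\Qp(1))$ does not rely on the pushforward machinery or the Tate $\Omega$-spectrum structure of Theorem \ref{thm:pushfwd} -- the trace is defined purely via the descent spectral sequence together with the basic identification $D^{2d}_c(X_h)(d+1) \cong \Qp(1)$, and the needed compatibility is exactly the assertion, stated as part of the descent spectral sequence proposition, that the $E_2$-level cup product is the one from \S\ref{sect:cupprod1}.
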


  \begin{proof}
   This is immediate from Proposition \ref{prop:cupprod1} and the compatibility of the $P$-syntomic descent spectral sequence with cup-products.
  \end{proof}

  Using the above formula together with the projection formula for cup-products (Theorem \ref{thm:cupprod}), we obtain the following consequences (in which $j = 2$ and $1 \le d \le 3$). For $\eta \in H^d_{\dR, c}(X / K)$ satisfying the hypotheses of Theorem 4.2, we write $\lambda_\eta$ for the map $H^{d + 1}_\syn(X_h, j) \to K$ given by the composition of $\operatorname{pr}_D$ and pairing with $\eta$.

  \begin{proposition}
   \label{prop:K0}
   If $d = 3$ and $z = \sum_i n_i [Z_i] \in H^{4}_{\mathrm{mot}}(X, 2)$ is the class of a codimension $2$ cycle with smooth components, and $\eta$ is as in Theorem \ref{thm:ajcupprod}, then we have
   \[ \lambda_\eta(r_{\syn}(z)) = \sum_i n_i \operatorname{tr}_{Z_i, \syn, K, P}( \iota_i^*(\tilde\eta)),\]
   where $\iota_i$ is the inclusion of $Z_i$ into $X$.
  \end{proposition}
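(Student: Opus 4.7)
The strategy is to combine Theorem~\ref{thm:ajcupprod} with the projection formula of Theorem~\ref{thm:cupprod}. By linearity in the components $Z_i$, I reduce immediately to the case of a single smooth codimension--$2$ closed immersion $\iota : Z \into X$, so $Z$ is a smooth affine curve. Theorem~\ref{thm:ajcupprod} applied to $z = [Z] \in H^4_\mathrm{mot}(X, 2)$ expresses $\lambda_\eta(r_{\syn}([Z]))$ as $\tr_{X, \syn, K, P}\bigl(r_{\syn, K}([Z]) \cup \tilde\eta\bigr)$, so the claim is equivalent to the identity
\[ \tr_{X, \syn, K, P}\bigl( r_{\syn, K}([Z]) \cup \tilde\eta \bigr) \;=\; \tr_{Z, \syn, K, P}\bigl(\iota^* \tilde\eta\bigr). \]

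The first ingredient is the identification of the syntomic cycle class $r_{\syn, K}([Z]) \in H^{4}_{\syn, K, 1 - T}(X_h, 2)$ with the Gysin pushforward $\iota_*(1_Z)$, where $1_Z \in H^0_{\syn, K, 1 - T}(Z_h, 0)$ is the unit of the syntomic $E_\infty$-algebra of $Z$. This is the defining property of cycle classes in any cohomology theory arising from a Tate $\Omega$-spectrum with a good theory of Gysin maps, and is built into D\'eglise's construction recalled in the appendix to \cite{NN}; the extension to our $P$-syntomic theory follows from Theorem~\ref{thm:pushfwd} together with compatibility of pushforwards with the change-of-$P$ and change-of-$L$ morphisms of Proposition~\ref{prop:fptheoryproperties}. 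Given this identification, the projection formula of Theorem~\ref{thm:cupprod} (used with $(1 - T) \star P = P$) yields
\[ r_{\syn, K}([Z]) \cup \tilde\eta \;=\; \iota_*(1_Z) \cup \tilde\eta \;=\; \iota_*\bigl(1_Z \cup \iota^* \tilde\eta \bigr) \;=\; \iota_*\bigl(\iota^* \tilde\eta\bigr) \]
inside $H^{7}_{\syn, K, P, c}(X_h, 4)$.

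The final ingredient is the compatibility of the trace map with Gysin pushforwards: by construction $\tr_{X, \syn, K, P}$ is the pushforward along the structure morphism $X \to \Spec K$ followed by the isomorphism of Proposition~\ref{prop:Qp1}, so functoriality of pushforwards under the composition $Z \to X \to \Spec K$ gives
\[ \tr_{X, \syn, K, P}\bigl(\iota_*(\alpha)\bigr) \;=\; \tr_{Z, \syn, K, P}(\alpha) \]
for every $\alpha \in H^3_{\syn, K, P, c}(Z_h, 2)$. Specialising to $\alpha = \iota^* \tilde\eta$ completes the argument.

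The main obstacle is making the above reasoning rigorous given that $\iota$ is a regular closed immersion rather than a smooth proper morphism, so Theorem~\ref{thm:pushfwd} as stated does not directly apply. One must extend D\'eglise's machinery to produce Gysin maps for closed immersions of smooth $K$-varieties in the $P$-syntomic setting, together with the associated projection formula against compactly supported classes; both are standard consequences of the Tate $\Omega$-spectrum formalism (via deformation to the normal cone), but they do require a small supplement to the precise statements cited above.
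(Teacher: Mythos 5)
Your argument follows essentially the same route as the paper's own proof: identify $r_{\syn,K}([Z_i])$ with the Gysin pushforward of the unit class, apply the projection formula to pull $\tilde\eta$ back to $Z_i$, and then invoke compatibility of the trace with pushforward along $Z_i \to X \to \Spec K$. Your concluding caveat is well taken --- $\iota_i$ is a proper closed immersion, not a smooth proper morphism, so Theorem~\ref{thm:pushfwd} as stated does not literally apply, and one must appeal to the closed-immersion Gysin maps supplied by the Tate $\Omega$-spectrum formalism of D\'eglise's appendix (deformation to the normal cone); the paper elides this point where you flag it explicitly, which is a fair addition rather than a departure.
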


  \begin{proof}
   Cf.~\cite[Theorem 1.2]{Bes97}. By the compatibility of motivic and syntomic pushforward maps, we have
   \[ r_{\syn, K}(z) = \sum_i n_i\ (\iota_i)_*\left(\mathbf{1}_{Z_i}\right),\]
   where $\mathbf{1}_{Z_i} \in H^0_{\syn, K}(Z_i, 0)$ is the identity class. By the projection formula, we have
   \[ r_{\syn, K}(z) \cup \tilde\eta = \sum_i n_i\, (\iota_i)_*\left( \mathbf{1}_{Z_i} \cup \iota_i^* \tilde\eta\right).\]
   Since the maps $\tr_{X, \syn, K, P}$ are compatible with pushforward (as is clear by comparison with their de Rham analogues), the result is now immediate from Theorem \ref{thm:cupprod}.
  \end{proof}

  \begin{proposition}
   \label{prop:K1}
   If $d = 2$ and $z = \sum_i (Z_i, u_i) \in H^{3}_{\mathrm{mot}}(X, 2)$, where $Z_i$ are codimension 1 cycles and $u_i \in \mathcal{O}(Z_i)^\times$, and $\eta$ is as in Theorem \ref{thm:ajcupprod}, then we have
   \[ \lambda_\eta(r_{\syn}(z)) = \sum_i n_i \operatorname{tr}_{Z_i, \syn, K, P}(r_{\syn, K}(u_i) \cup \iota_i^*(\tilde\eta)),\]
   where $\iota_i$ is the inclusion of $Z_i$ into $X$. Here, $r_{\syn}$ is the composition
   \[ \mathcal{O}(Z_i)^\times = H^1_{\mathrm{mot}}(Z_i,1)\rTo H^1_{\syn}(Z_i,1).\]
  \end{proposition}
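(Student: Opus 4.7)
The argument is exactly parallel to that of Proposition \ref{prop:K0}; the only new input is that the motivic class $z$ is the pushforward not of the fundamental class but of a $K_1$-class. Concretely, by the usual description of the motivic cohomology of the smooth surface $X$, each generator $(Z_i, u_i) \in H^3_{\mathrm{mot}}(X, 2)$ equals $(\iota_i)_*(u_i)$, where $u_i$ is regarded as an element of $H^1_{\mathrm{mot}}(Z_i, 1) = \mathcal{O}(Z_i)^\times$ and $(\iota_i)_* \colon H^1_{\mathrm{mot}}(Z_i, 1) \to H^3_{\mathrm{mot}}(X, 2)$ is the motivic Gysin map.

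First I would apply Theorem \ref{thm:ajcupprod} to rewrite the left-hand side as $\tr_{X, \syn, K, P}(r_{\syn, K}(z) \cup \tilde\eta)$. By the compatibility of motivic and syntomic realizations with proper pushforward (the same input used in the proof of Proposition \ref{prop:K0}), this becomes
\[ \sum_i n_i\, \tr_{X, \syn, K, P}\bigl((\iota_i)_*(r_{\syn, K}(u_i)) \cup \tilde\eta\bigr). \]
The projection formula (Theorem \ref{thm:cupprod}) then allows us to move $(\iota_i)_*$ past the cup product, at the cost of replacing $\tilde\eta$ by its pullback $\iota_i^*\tilde\eta$, giving
\[ \sum_i n_i\, \tr_{X, \syn, K, P}\bigl((\iota_i)_*(r_{\syn, K}(u_i) \cup \iota_i^*\tilde\eta)\bigr). \]
Finally, the compatibility of the syntomic trace with proper pushforward (which follows from its description via the descent spectral sequence together with the analogous statement for the de Rham trace on smooth curves) converts $\tr_{X, \syn, K, P} \circ (\iota_i)_*$ into $\tr_{Z_i, \syn, K, P}$, yielding the claimed identity.

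I expect no real obstacle beyond those already resolved in the proof of Proposition \ref{prop:K0}. The only housekeeping to check is that the polynomial indexing works correctly: since the class $r_{\syn, K}(u_i) \in H^1_{\syn, K}(Z_i, 1)$ uses the polynomial $1 - T$, and $(1-T) \star P = P$ (the root $1$ just reproduces the roots of $P$), the cup-product lands in $H^3_{\syn, K, P, c}(Z_i, 2)$, which is the correct source of $\tr_{Z_i, \syn, K, P}$. The identification $(\iota_i)_* u_i = (Z_i, u_i)$ under the motivic Gysin map is standard, and once it is granted the entire proof is a formal manipulation with the pushforward, pullback, projection formula, and cup-product structures developed in the previous section.
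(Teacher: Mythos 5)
Your proof is correct and follows essentially the same route as the paper's: identify $z$ as a sum of motivic pushforwards $(\iota_i)_*(u_i)$, use compatibility of motivic and syntomic pushforwards to get $r_{\syn,K}(z) = \sum_i n_i (\iota_i)_*(r_{\syn,K}(u_i))$, then apply Theorem~\ref{thm:ajcupprod}, the projection formula (Theorem~\ref{thm:cupprod}), and compatibility of traces with pushforward. Your remark about $(1-T)\star P = P$ ensuring the polynomial bookkeeping works is a nice sanity check that the paper leaves implicit.
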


  \begin{proof}
   Again, by the compatibility of motivic and syntomic pushforwards, we have
   \[ r_{\syn, K}(z) = \sum_i (\iota_i)_*(r_{\syn, K}(u_i)), \]
   and the result is immediate from Theorem \ref{thm:cupprod}.
  \end{proof}

  \begin{proposition}
   \label{prop:K2}
   If $d = 1$ and $z = \sum_i \{ u_i, v_i \} \in H^{2}_{\mathrm{mot}}(X, 2)$, where $\{ u_i, v_i \}$ is the Steinberg symbol of two units $u_i, v_i \in \mathcal{O}(X)^\times$, and $\eta$ is as in Theorem \ref{thm:ajcupprod}, then we have
   \[ \lambda_\eta(r_{\syn}(z)) = \sum_i \operatorname{tr}_{X, \syn, K, P}(r_{\syn, K}(u_i) \cup r_{\syn, K}(v_i) \cup \tilde\eta).\]
  \end{proposition}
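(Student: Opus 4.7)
The plan is to reduce \textbf{Proposition \ref{prop:K2}} to a direct application of Theorem \ref{thm:ajcupprod}, using only the multiplicativity of the motivic-to-syntomic realization map. Since $X$ is a smooth affine curve, we have $d=1$ and $j=2$, so Theorem \ref{thm:ajcupprod} applies: for any class $z \in H^2_{\mathrm{mot}}(X,2)$ and any lift $\tilde\eta \in H^1_{\syn, K, P, c}(X_h, 0)$ of $\eta$, we obtain
\[ \lambda_\eta(r_{\syn}(z)) = \operatorname{tr}_{X, \syn, K, P}\left(r_{\syn, K}(z) \cup \tilde\eta\right). \]
All that remains is to rewrite $r_{\syn, K}(\{u_i, v_i\})$ in terms of $r_{\syn, K}(u_i)$ and $r_{\syn, K}(v_i)$ and sum over $i$.

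For this step, I would invoke the fact recorded in Theorem \ref{thm:NNtheory1} that the regulator from Voevodsky motivic cohomology to $\rgam_{\syn}(X_h, *)$ is a morphism of graded-commutative $E_\infty$-algebras; in particular, it is compatible with cup products. Since the Steinberg symbol $\{u, v\} \in H^2_{\mathrm{mot}}(X, 2)$ is, by definition, the cup product in motivic cohomology of the classes $u \in H^1_{\mathrm{mot}}(X, 1) = \mathcal{O}(X)^\times \otimes \QQ$ and $v \in H^1_{\mathrm{mot}}(X, 1)$, multiplicativity of $r_{\syn}$ gives
\[ r_{\syn, K}(\{u_i, v_i\}) = r_{\syn, K}(u_i) \cup r_{\syn, K}(v_i) \quad \text{in } H^2_{\syn, K, 1-T}(X_h, 2). \]
Substituting into the conclusion of Theorem \ref{thm:ajcupprod} and summing over $i$ yields the claimed formula. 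Note that the cup products on the right-hand side are taken in $P$-syntomic cohomology after applying the change-of-$P$ map from Proposition \ref{prop:fptheoryproperties}(2) (since $r_{\syn, K}(u_i)$ and $r_{\syn, K}(v_i)$ live in the $1 - T$ theory, while $\tilde\eta$ lives in the $P$-theory); the associativity of the cup-product, which holds up to coherent homotopy, guarantees that the triple product is well-defined.

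There is no genuine obstacle here: the proof is a formal concatenation of Theorem \ref{thm:ajcupprod} (which handles the general passage from motivic cohomology to the pairing with $\eta$), the multiplicativity of the syntomic realization map (Theorem \ref{thm:NNtheory1}), and the compatibility of cup products across change of $P$ (Proposition \ref{prop:fptheoryproperties}(2) together with Theorem \ref{thm:cupprod}). The only mildly delicate point is bookkeeping of the convolution polynomials: one must observe that the polynomial governing the cohomology class $r_{\syn, K}(u_i) \cup r_{\syn, K}(v_i) \cup \tilde\eta$ is $(1-T) \star (1-T) \star P = P$ (since $(1-T) \star Q = Q$ for any $Q$), so the triple cup-product indeed lands in $H^3_{\syn, K, P, c}(X_h, 2)$, which is the group on which $\operatorname{tr}_{X, \syn, K, P}$ is defined.
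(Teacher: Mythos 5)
Your proof is correct and is essentially the same one-line argument the paper gives: apply Theorem \ref{thm:ajcupprod}, then observe that the Steinberg symbol maps under the (multiplicative) syntomic realization to $r_{\syn,K}(u_i)\cup r_{\syn,K}(v_i)$. You have merely spelled out the multiplicativity citation (Theorem \ref{thm:NNtheory1}) and the straightforward polynomial bookkeeping $(1-T)\star(1-T)\star P = P$, both of which the paper leaves implicit.
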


  \begin{proof}
   Observe that the image of $\{ u_i, v_i \}$ in $H^2_{\syn}(X,2)$ is equal to $r_{\syn, K}(u_i) \cup r_{\syn, K}(v_i)$.
  \end{proof}

 \subsection{The cohomological triple symbol}

  We now define a ``triple symbol'' attached to three de Rham classes on a curve $Y$, which is closely related to the global triple index of \cite{Bes10}. Let $Y$ be a connected smooth affine curve over $K$.

  Let us choose a class $\eta \in H^1_{\dR, c}(Y / K)$; and let $\omega_1, \omega_2 \in \Fil^1 H^1_{\dR}(Y/K)$. We suppose that $\omega_1, \omega_2$ are in the image of the map
  \[ H^1_{HK}(Y_{K, h})^{N = 0} \otimes K  \into H^1_{\dR}(Y / K), \]
  and similarly for $\eta$ with compact supports; and that there are polynomials $P_1, P_2$ with the property that $P_1(q^{-1}\Phi)(\omega_1) = P_2(q^{-1}\Phi)(\omega_2) = 0$, where we regard $\Phi$ as an endomorphism of $H^1_{\dR}(Y/K)$ via the comparison isomorphism $\iota^B_{\dR}$.

  \begin{assumption}
   The following conditions are satisfied:
   \begin{itemize}
    \item There are polynomials $P_0, P_1, P_2 \in 1 + T K[T]$ such that
    \[ \eta \in H^0_{\st, K, P_0, c}(D^1_c(Y_{K, h})),\ \omega_1 \in H^0_{\st, K, P_1}(D^1(Y_{K, h})(1)),\ \omega_2 \in H^0_{\st, K, P_2}(D^1(Y_{K, h})(1)).\]
    \item The class $\omega_1$ is in the kernel of the ``knight's move'' map
    \[ H^0_{\st, K, P_1}(D^1(Y_{K, h})(1)) \to H^2_{\st, K, P_1}(D^0(Y_{K, h})(1)),\]
    and similarly for $\omega_2$.
    \item The polynomial $P_0 \star P_1 \star P_2$ does not vanish at $1$ or $q^{-1}$.
    \item We have $\eta \cup \omega_1 = \eta \cup \omega_2 = 0$ as elements of $H^2_{\dR, c}(Y / K) \cong K$.
   \end{itemize}
  \end{assumption}

  \begin{remark}
   We have
   \[ H^2_{\st, K, P}(D^0(Y_{K, h})(1)) = H^2_{\st, K, P}(\Qp(1)) = \frac{K}{P(1) K},\]
   so if $P_1(1) \ne 0$, then $\omega_1$ is automatically in the kernel of the ``knight's move'' map. In particular, this applies if $\omega_1$ is pure of weight $1$. On the other hand, if $\omega_1 = \operatorname{dlog} u$ for a unit $u$, then we have no choice but to take $P_1(1) = 0$, but the existence of the syntomic regulator $r_{\syn}$ and its compatibility with the de Rham regulator forces $\omega_1$ to be in the kernel of this map since we know it is in the image of $H^1_{\syn}(Y_h, 1)$.

   If all three de Rham classes are pure of weight 1, then $P_0 \star P_1 \star P_2$ has all its roots of weight $-1$, so in particular it is non-vanishing at $1$ and $q^{-1}$.
  \end{remark}

  \begin{definition}
   Under the above assumptions, we define the \emph{triple symbol} $[\eta; \omega_1, \omega_2] \in K$ by the formula
   \[ [\eta; \omega_1, \omega_2] = \tr_{Y, \syn, P_0 \star P_1 \star P_2}\left( \tilde \eta \cup \tilde\omega_1 \cup \tilde\omega_2\right)\]
   where $\tilde \eta \in H^1_{\syn, K, P_0, c}(Y_{K, h}, 0)$ and $\tilde\omega_i \in H^1_{\syn, K, P_i}(Y_{K, h}, 1)$ are liftings of $\eta$ and the $\omega_i$.
  \end{definition}

  \begin{proposition}
   The above quantity is independent of the choice of liftings and of the polynomials $P_i$.
  \end{proposition}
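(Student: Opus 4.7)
The plan is to handle independence of the polynomials and independence of the lifts separately, each time reducing to the compatibility of the descent spectral sequence with cup products together with the fact that $\tr_{Y, \syn}$ factors through the quotient by $\Fil^2$.

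For independence of $P_1$: given an alternative polynomial $P_1'$ annihilating $\omega_1$, set $R_1 = P_1 P_1'$ and pick lifts $\tilde\omega_1$ and $\tilde\omega_1'$ in the $P_1$- and $P_1'$-theories. By Proposition \ref{prop:fptheoryproperties}(2) the change-of-$P$ maps are compatible with cup products, and push $\tilde\omega_1, \tilde\omega_1'$ forward to two lifts of $\omega_1$ in $H^1_{\syn, K, R_1}(Y_{K, h}, 1)$. Moreover, a direct computation from formula \eqref{difference} shows that the induced map $H^1_{\st, K, S}(\Qp(1)) \to H^1_{\st, K, ST}(\Qp(1))$ is compatible with the canonical isomorphisms of Proposition \ref{prop:Qp1} to $K$, so the trace commutes with change of $P$. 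This reduces independence of $P_1$ to independence of the lift at the level of $R_1$; the arguments for $P_0$ and $P_2$ are analogous.

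For independence of lifts: the class $\tilde\eta$ is in fact unique, since $\Fil^1 H^1_{\syn, K, P_0, c}(Y_{K, h}, 0)$ is a subquotient of $H^1_{\st, K, P_0}(D^0_c(Y_{K, h}))$, which vanishes because $D^0_c(Y_{K, h}) = 0$ (as $Y$ is affine and connected). For $\tilde\omega_i$ (with $i \in \{1, 2\}$ and $j$ the other index), two lifts differ by a class $\delta \in \Fil^1 H^1_{\syn, K, P_i}(Y_{K, h}, 1)$, and graded commutativity combined with associativity gives, on the level of cohomology classes, the identity
\[ \tilde\eta \cup \delta \cup \tilde\omega_j = \pm\, \delta \cup \bigl(\tilde\eta \cup \tilde\omega_j\bigr). \]
The key observation is that $\tilde\eta \cup \tilde\omega_j \in H^2_{\syn, K, P_0 \star P_j, c}(Y_{K, h}, 1)$ projects in the descent spectral sequence to $\eta \cup \omega_j \in E_2^{0, 2} = H^0_{\st, K, P_0 \star P_j}(D^2_c(Y_{K, h})(1))$; the identification $D^2_c(Y_{K, h})(1) \cong \Qp$, which comes from $H^2_{\et, c}(Y_{\overline K}, \Qp) = \Qp(-1)$ by Poincar\'e duality, identifies this class with the de Rham pairing of $\eta$ and $\omega_j$, which vanishes by hypothesis. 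Hence $\tilde\eta \cup \tilde\omega_j \in \Fil^1 H^2_{\syn, c}$. Since the spectral sequence is multiplicative ($\Fil^p \cup \Fil^{p'} \subseteq \Fil^{p + p'}$), the class $\delta \cup (\tilde\eta \cup \tilde\omega_j)$ lies in $\Fil^2 H^3_{\syn, c, P_0 \star P_1 \star P_2}(Y_{K, h}, 2)$ and is killed by $\tr_{Y, \syn, K, P_0 \star P_1 \star P_2}$.

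The main technical step is the identification of the $E_2^{0, 2}$-cup-product with the de Rham Poincar\'e pairing, which uses the multiplicativity of the $P$-syntomic descent spectral sequence and the computation of $D^2_c$. The remaining checks---the commutativity of change-of-$P$ with the trace, and graded commutativity across differing polynomial labels---follow from the $E_\infty$-algebra structure on $\bigoplus_{r, P}\rgam_{\syn, L, P}(U, \overline U, r)$ described in the preceding subsection.
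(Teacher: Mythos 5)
Your argument is correct, but it reaches the conclusion by a different route than the paper. The paper works explicitly at the chain level: it represents a second lift of $\omega_1$ by $[u,v;w+\lambda,x+\mu,y;z]$ with $\lambda,\mu\in K$, and then uses the explicit cup-product formulae of Proposition~\ref{prop:cupprod} to see directly that varying $\mu$ has no effect and that varying $\lambda$ shifts $\tilde\eta\cup\tilde\omega_1\cup\tilde\omega_2$ by a multiple of the de Rham pairing $\eta\cup\omega_2$, which vanishes by hypothesis; independence of the $P_i$ is then noted to follow from change-of-$P$ compatibility of the cup-product and of $\tr_{Y,\syn}$. You replace the explicit chain-level computation with the filtration formalism of the descent spectral sequence: $\delta\in\Fil^1$, $\tilde\eta\cup\tilde\omega_j\in\Fil^1$ (because its $E_\infty^{0,2}$-component is $\eta\cup\omega_j=0$), and multiplicativity of the filtration forces the cup-product to land in $\Fil^2$, which is killed by the trace. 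The two arguments use the same essential input (the vanishing of $\eta\cup\omega_1$ and $\eta\cup\omega_2$, plus change-of-$P$ compatibility), but yours avoids touching the cocycle formulae, at the cost of invoking the multiplicativity of the spectral-sequence filtration and graded-commutativity across differing polynomial labels, both of which you correctly source from the $E_\infty$-structure and from the stated compatibility of the spectral sequence with cup-products. Your identification of $\Fil^1H^1_{\syn,K,P_0,c}(Y_{K,h},0)$ with a subquotient of $H^1_{\st,K,P_0}(D^0_c)=0$ to get uniqueness of $\tilde\eta$ is an equivalent reformulation of the paper's remark that $H^1_{\syn,K,P_0,c}\to H^1_{\dR,c}$ is an isomorphism. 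Both proofs are sound; yours is more formal and structural, the paper's more computational.
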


  \begin{proof}
   Firstly, we note that the natural map $H^1_{\syn, K, P_0, c}(Y_{K, h}, 0) \to H^1_{\dR, c}(Y / K)$ is an isomorphism, since the degree 0 compactly-supported cohomology is zero. Thus the class $\tilde\eta$ is uniquely defined.

   If $\tilde\omega_1$ is a class in $H^1_{\syn, K, P_1}$ lifting $\omega_1$, and $[u, v; w, x, y; z]$ is a representative of $\tilde\omega_1$ in $\rgam^1_{\syn, K, P_1}$ (with $z = 0$, necessarily, for degree reasons), then any other choice of lifting can be represented by $[u, v; w + \lambda, x + \mu, y; z]$ for some constants $\lambda, \mu \in K$ (with $\mu = 0$ unless $P_1(1) = 0$). From the definition of the cup-product, one sees that varying $\mu$ has no effect on $\tilde \eta \cup \tilde\omega_1 \cup \tilde\omega_2$; while varying $\lambda$ changes the cup product by a multiple of $\eta \cup \omega_2$, which is zero by assumption. Similarly, the assumption that $\eta \cup \omega_1 = 0$ implies that the cup-product is independent of the choice of lifting of $\omega_2$.

   So $[\eta; \omega_1, \omega_2]$ is well-defined for a fixed choice of polynomials $P_i$. However, both the cup-product and the map $\tr_{Y, \syn, P_0 \star P_1 \star P_2}$ are compatible with change of the polynomials $P_i$, so the symbol $[\eta; \omega_1, \omega_2]$ is also independent of these choices.
  \end{proof}

  \begin{remark}
   We may also carry out the same construction if $Y$ is projective, rather than affine, if we add the assumption that $P_0(1) \ne 0$ and $P_0(q) \ne 0$; this assumption implies that there is still a unique lifting of $\eta$ to $H^1_{\syn, K, P_0, c}(Y_{K, h}, 0) = H^1_{\syn, K, P_0}(Y_{K, h}, 0)$. In particular, this holds if $P_0$ is pure of weight 1.
  \end{remark}

 \subsection{Products of curves}

  We now assume $Y$ is a connected affine curve over $K$ with semistable reduction.    The following results are simply special cases of Propositions \ref{prop:K0}--\ref{prop:K2}, with the right-hand sides of the formulae rewritten in terms of the triple symbol.

  \begin{proposition}
   Let $\eta \in H^1_{\dR, c}(Y / K)$, and let $\omega_{1}, \omega_{2} \in \Fil^1 H^1_{\dR}(Y / K)$ be classes lying in the parabolic subspace (the image of $H^1_{\dR,c}$ in $H^1_{\dR}$). Suppose that the triple symbol $[\eta; \omega_1, \omega_2]$ is defined.

   Let $X = Y \times Y \times Y$ and let $z \in H^4_{\mathrm{mot}}(X, 2)$ be the class of the diagonal embedding $Y \into Y \times Y \times Y$. If $\mu$ denotes the class $\eta \otimes \omega_{1, c} \otimes \omega_{2, c} \in \Fil^2 H^3_{\dR, c}(X / K)$, where $\omega_{i, c}$ are any liftings of $\omega_i$ to compactly-supported cohomology, then
   \[ \lambda_{\mu}(r_{\syn}(z)) = [\eta; \omega_1, \omega_2]. \]
  \end{proposition}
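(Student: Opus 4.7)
The plan is to apply Proposition \ref{prop:K0} to the cycle class $z = [\Delta] \in H^4_{\mathrm{mot}}(Y^3, 2)$ (taking $d = 3$, $j = 2$), using a convenient crystalline quotient $D$ of $D^3(Y^3_h)(2)$ dual to a subspace containing $\mu$, with $P = P_0 \star P_1 \star P_2$. The existence of such a $D$ is essentially built into the hypotheses defining the triple symbol: the K\"unneth formula for Hyodo--Kato cohomology identifies a direct summand $D^1(Y_h) \otimes D^1(Y_h)(1) \otimes D^1(Y_h)(1) \subseteq D^3(Y^3_h)(2)$ into which $\mu$ falls, the polynomials $P_0, P_1, P_2$ provide the required Frobenius-equivariance for each tensor factor (so that $P(\Phi)\mu = 0$ by the convolution identity defining $\star$), and the parabolicity of $\omega_1, \omega_2$ together with the assumption $P(1), P(q^{-1}) \ne 0$ guarantee that the orthogonal complement of this summand cuts out a convenient crystalline quotient $D$ with $\mu \in \Fil^0 D^*(1)_K$.

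I would then lift $\mu$ to a class $\tilde\mu \in H^3_{\syn, K, P, c}(Y^3_h, 2)$ as an external product $\tilde\mu = \tilde\eta \boxtimes \tilde\omega_{1,c} \boxtimes \tilde\omega_{2,c}$, where $\tilde\eta \in H^1_{\syn, K, P_0, c}(Y_h, 0)$ is the unique compactly-supported lift of $\eta$ and $\tilde\omega_{i,c} \in H^1_{\syn, K, P_i, c}(Y_h, 1)$ are any compactly-supported lifts of the $\omega_i$, which exist since the $\omega_i$ are parabolic. Proposition \ref{prop:K0} applied to the single-component cycle $[\Delta]$ then gives
\[ \lambda_\mu(r_\syn([\Delta])) = \tr_{Y, \syn, K, P}\bigl(\Delta^* \tilde\mu\bigr). \]
By the standard compatibility $\Delta^*(a \boxtimes b \boxtimes c) = a \cup b \cup c$ (which follows from $p_i \circ \Delta = \mathrm{id}_Y$ for each projection $p_i : Y^3 \to Y$), the right-hand side equals $\tr_{Y, \syn, K, P}(\tilde\eta \cup \tilde\omega_{1,c} \cup \tilde\omega_{2,c})$. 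Finally, each $\tilde\omega_{i,c}$ maps to a valid (not necessarily compactly-supported) lift of $\omega_i$ in $H^1_{\syn, K, P_i}(Y_h, 1)$, so the independence-of-lifts verification already carried out in the proof of well-definedness of the triple symbol identifies this trace with $[\eta; \omega_1, \omega_2]$.

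The main obstacle is establishing the K\"unneth external product on compactly-supported $P$-syntomic cohomology and its compatibility with diagonal pullback, since the explicit formulae of Proposition \ref{prop:cupprod} and the projection formula of Theorem \ref{thm:cupprod} address cup-products only internally to a fixed variety. The external product should follow from an extension of the Tate $\Omega$-spectrum arguments used in the proofs of Theorems \ref{thm:pushfwd} and \ref{thm:cupprod}, along the lines of D\'eglise's machinery in the appendix to \cite{NN}: once $\mathscr{S}_{L, P, c}$ is endowed with its module structure over the $E_\infty$-algebra $\bigoplus_{r,P} \mathscr{S}_{L, P}$, the external product on compactly-supported cohomology arises by base change from products over $\Spec K$, and the diagonal compatibility follows formally from the universal property of the external tensor product, reducing to the factorization of the internal cup-product through the exterior product along $\Delta^*$.
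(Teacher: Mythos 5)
Your proposal is correct and follows essentially the same route as the paper: the paper's entire proof of this proposition (and its two companions in the same subsection) is the single remark that they are ``simply special cases of Propositions \ref{prop:K0}--\ref{prop:K2}, with the right-hand sides of the formulae rewritten in terms of the triple symbol,'' and your argument is precisely a fleshing-out of that instruction. You apply Proposition \ref{prop:K0} with $d = 3$, $j = 2$, $P = P_0 \star P_1 \star P_2$ to the one-component cycle $[\Delta]$, lift $\mu$ as an external product $\tilde\eta \boxtimes \tilde\omega_{1,c} \boxtimes \tilde\omega_{2,c}$, use $\Delta^*(a \boxtimes b \boxtimes c) = a \cup b \cup c$, and invoke the independence-of-lifts already established in the well-definedness proof for the triple symbol to recognize the resulting trace. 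This is the intended argument. You also correctly identify the one point the paper glosses over: the existence of an external product on compactly-supported $P$-syntomic cohomology and its compatibility with $\Delta^*$; this is indeed not constructed explicitly in the paper but, as you note, follows from the same monoidal-spectrum machinery of D\'eglise used to build the pushforwards, so it is a fair omission on both sides rather than a gap in your proof specifically.
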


  \begin{proposition}
   Let $\eta \in H^1_{\dR, c}(Y / K)$ and let $\omega \in \Fil^1 H^1_{\dR}(Y / K)$ be a class lying in the parabolic subspace. Let $u \in \mathcal{O}(Y)^\times$, and suppose that the triple symbol $[\eta; \omega_1, \operatorname{dlog} u]$ is defined.

   Let $X = Y \times Y$ and let $z \in H^3_{\mathrm{mot}}(X, 2)$ be the pushforward of $u \in H^1_\mathrm{mot}(Y, 1) \cong \mathcal{O}(Y)^\times$ along the diagonal embedding $Y \into Y \times Y$. If $\mu$ denotes the class $\eta \otimes \omega_c \in \Fil^1 H^2_{\dR, c}(X / K)$, where $\omega_c$ is any lifting of $\omega$ to compactly-supported cohomology, then
   \[ \lambda_{\mu}(r_{\syn}(z)) = [\eta; \omega, \operatorname{dlog} u].\]
  \end{proposition}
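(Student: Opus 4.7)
The approach is a direct application of Proposition \ref{prop:K1}, followed by a rewriting of the resulting trace as the triple symbol. By construction, $z = \Delta_*(u)$ where $\Delta: Y \into X = Y \times Y$ is the diagonal embedding, so Proposition \ref{prop:K1} (with the single cycle $Z_1 = \Delta(Y)$, $n_1 = 1$ and $u_1 = u$) yields immediately
\[
\lambda_\mu(r_\syn(z)) = \tr_{Y, \syn, K, P}\!\bigl(r_{\syn, K}(u) \cup \Delta^*\tilde\mu\bigr)
\]
for a suitable polynomial $P$ and a $P$-syntomic lift $\tilde\mu \in H^2_{\syn, K, P, c}(X_h, 1)$ of $\mu$.

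To identify the right-hand side with the triple symbol, the plan is to choose $\tilde\mu$ to be an external cup-product $p_1^*\tilde\eta \cup p_2^*\tilde\omega_c$ on $X$, where $p_1, p_2: X \to Y$ are the projections and $\tilde\eta \in H^1_{\syn, K, P_0, c}(Y_h, 0)$, $\tilde\omega_c \in H^1_{\syn, K, P_1, c}(Y_h, 1)$ are compactly-supported lifts of $\eta$ and $\omega_c$ furnished by the hypothesis that the triple symbol is defined. Setting $P = P_0 \star P_1$, the de Rham K\"unneth formula confirms that this is a lift of $\mu$, and the identity $p_i \circ \Delta = \mathrm{id}_Y$ together with functoriality of cup-products yields $\Delta^*\tilde\mu = \tilde\eta \cup \tilde\omega_c$. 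Substituting, and cyclically permuting the three degree-$1$ classes $r_{\syn, K}(u),\ \tilde\eta,\ \tilde\omega_c$ via graded commutativity of the syntomic cup-product (two transpositions of pairs of odd-degree classes contribute a net sign of $+1$), the formula becomes
\[
\tr_{Y, \syn, K, (1-T) \star P_0 \star P_1}\!\bigl(\tilde\eta \cup \tilde\omega_c \cup r_{\syn, K}(u)\bigr) = [\eta; \omega, \operatorname{dlog} u],
\]
since $r_{\syn, K}(u)$ is a lift of $\operatorname{dlog} u$ and $\tilde\omega_c$ serves as a lift of $\omega$ for the purposes of the triple-symbol construction (the compactly-supported and ordinary variants giving the same pairing once the compact supports are carried by $\tilde\eta$).

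The main technical obstacle is to supply an \emph{external} $P$-syntomic cup-product
\[
\rgam_{\syn, L, P, c}(Y_h, r) \otimes \rgam_{\syn, L, Q, c}(Y_h, s) \longrightarrow \rgam_{\syn, L, P \star Q, c}(X_h, r+s),
\]
since Theorem \ref{thm:cupprod} as stated provides only the internal cup-product on a fixed variety. The natural route is to build this from internal cup-products on $X$ composed with suitable pullbacks of the $h$-sheaves $\mathscr{S}_{L, P}(r)$ along the projections; the construction should parallel that of external cup-products on the constituent Hyodo--Kato and de Rham complexes, and should be visibly compatible with the explicit cochain-level formulae of Proposition \ref{prop:cupprod}. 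Once this external product is in place the proof reduces to the bookkeeping sketched above.
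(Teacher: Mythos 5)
Your proof follows essentially the same route the paper intends: the paper simply states that these propositions are ``special cases of Propositions \ref{prop:K0}--\ref{prop:K2}, with the right-hand sides of the formulae rewritten in terms of the triple symbol,'' and your argument is the obvious expansion of that rewriting — choose $\tilde\mu$ to be a K\"unneth lift $\tilde\eta\boxtimes\tilde\omega_c$, pull back along the (proper) diagonal, and cyclically permute the three degree-one classes (with the sign $(-1)^{1\cdot 2}=+1$ as you note). The external cup-product in compactly-supported $P$-syntomic cohomology that you flag is a genuine ingredient not explicitly supplied by the paper, but it is implicit in the ring-spectrum machinery of the pushforward construction; flagging it is reasonable, and the rest of the bookkeeping (including using the image of $\tilde\omega_c$ in non-compactly-supported cohomology as the lift of $\omega$ in the triple-symbol definition, and taking $P_2=1-T$ so that $\tr_{Y,\syn,K,(1-T)\star P_0\star P_1}$ agrees with $\tr_{Y,\syn,K,P_0\star P_1\star P_2}$) is correct.
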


  \begin{proposition}
   Let $\eta \in H^1_{\dR, c}(Y / K)$. Let $u, v \in \mathcal{O}(Y)^\times$, and suppose that the triple symbol $[\eta; \operatorname{dlog} u, \operatorname{dlog} v]$ is defined.

   Let $z \in H^2_{\mathrm{mot}}(X, 2)$ be the cup-product of the classes $u, v \in H^1_\mathrm{mot}(Y, 1) \cong \mathcal{O}(Y)^\times$. Then
   \[ \lambda_{\eta}(r_{\syn}(z)) = [\eta; \operatorname{dlog} u, \operatorname{dlog} v].\]
  \end{proposition}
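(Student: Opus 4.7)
The argument is a direct application of Proposition \ref{prop:K2} combined with the definition of the triple symbol, specialised to the case $d = 1$, $j = 2$, $X = Y$.

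First I would check that the hypotheses of Theorem \ref{thm:ajcupprod} are satisfied. The assumption that $[\eta; \operatorname{dlog} u, \operatorname{dlog} v]$ is defined provides a polynomial $P_0$ with $P_0(\Phi)\eta = 0$ and such that $P_0 \star P_1 \star P_2$ does not vanish at $1$ or $q^{-1}$. Since the root of $1 - T$ is $1$, we have $(1 - T) \star Q = Q$ for every $Q$; thus taking $P_1 = P_2 = 1 - T$ (which will be the appropriate choice for the $\operatorname{dlog}$ classes below) collapses this condition to $P_0(1) \ne 0$ and $P_0(q^{-1}) \ne 0$, which are precisely the hypotheses required to apply Proposition \ref{prop:K2} with $P = P_0$.

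Next, applying Proposition \ref{prop:K2} with this $P$ and fixing any lift $\tilde\eta \in H^1_{\syn, K, P_0, c}(Y_h, 0)$ of $\eta$ produces
\[ \lambda_{\eta}(r_{\syn}(z)) = \operatorname{tr}_{Y, \syn, K, P_0}\bigl( r_{\syn, K}(u) \cup r_{\syn, K}(v) \cup \tilde\eta \bigr). \]
It remains only to identify the right-hand side with $[\eta; \operatorname{dlog} u, \operatorname{dlog} v]$. The motivic-to-syntomic regulator sends $u \in H^1_\mathrm{mot}(Y, 1)$ to a class $r_{\syn, K}(u) \in H^1_{\syn, K, 1 - T}(Y_h, 1)$ whose de Rham realization is $\operatorname{dlog} u$ (and similarly for $v$), so these classes serve as legitimate lifts $\tilde\omega_1, \tilde\omega_2$ of $\operatorname{dlog} u, \operatorname{dlog} v$ in the sense required by the triple symbol, with $P_1 = P_2 = 1 - T$. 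The knight's-move vanishing for the $\omega_i$ holds automatically because the $\tilde\omega_i$ come from $H^1_{\syn}(Y_h, 1)$, exactly as noted in the remark following the triple symbol's assumption. Since $(1 - T) \star (1 - T) \star P_0 = P_0$, the iterated cup-product lives in $H^3_{\syn, K, P_0, c}(Y_h, 2)$, and graded commutativity (reordering a degree-$2$ class past a degree-$1$ class incurs no sign) identifies $r_{\syn, K}(u) \cup r_{\syn, K}(v) \cup \tilde\eta$ with $\tilde\eta \cup r_{\syn, K}(u) \cup r_{\syn, K}(v)$. The trace of the latter is $[\eta; \operatorname{dlog} u, \operatorname{dlog} v]$, by definition and by the independence-of-choices statement established immediately before the proposition.

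The only point requiring real care is the polynomial bookkeeping, i.e.\ verifying that the choice $P_1 = P_2 = 1 - T$ is compatible with all of the hypotheses of both the triple symbol and Proposition \ref{prop:K2}; beyond that the argument is essentially automatic, reducing to an invocation of previously-established results.
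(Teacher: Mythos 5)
Your argument is correct and matches the paper's own, which simply observes that the result is a special case of Proposition~\ref{prop:K2} with the right-hand side rewritten via the definition of the triple symbol. The bookkeeping you carry out (that $P_1 = P_2 = 1-T$ is a valid choice, that $(1-T)\star(1-T)\star P_0 = P_0$, and that the reordering of the cup product incurs no sign) is exactly what is implicit in the paper's one-line justification.
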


 \subsection{An alternative definition of the triple symbol}

  We conclude by giving an alternative, equivalent description of the symbol $[\eta; \omega_1, \omega_2]$, which we hope may be useful in relating our results to $p$-adic modular forms (as in the calculations of \cite{DR1} in the good-reduction case).

  The assertion that $(P_0 \star P_1 \star P_2)(q^{-1}) \ne 0$ implies that $(P_1 \star P_2)(q^{-1} \Phi^{-1})$ acts bijectively on the kernel of $P_0(\Phi)$, so that $\frac{1}{(P_1 \star P_2)(q^{-1}\Phi^{-1})} \eta$ is well-defined.

  \begin{proposition}
   Suppose that $\tilde\omega_1 \cup \tilde\omega_2 \bmod \Fil^2$ is represented by the class
   \[ [w, x, y] \in H^1_{\st, K, P_1 \star P_2}(D^1(Y_h)(2)).\]
   Then we have
   \[ [\eta; \omega_1, \omega_2] = \eta \cup y - \left(\frac{1}{(P_1 \star P_2)(q^{-1} \Phi^{-1})} \eta\right) \cup w.\]
  \end{proposition}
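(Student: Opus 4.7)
The plan is to compute the triple cup-product at the level of the $P$-syntomic descent spectral sequence using Proposition 1.3. Since $Y$ is a smooth affine curve, $D^2_c(Y_h)(2) = \Qp(1)$, and $\tr_{Y,\syn,P_0\star P_1\star P_2}$ factors through the $E_\infty^{1,2}$-piece $H^1_{\st,K,P_0\star P_1\star P_2}(\Qp(1))\cong K$. Cupping $\tilde\eta\in E_\infty^{0,1}$ against the $\Fil^2$-part of $H^2_{\syn}(Y_h,2)$ yields classes in $\Fil^2 H^3_{\syn,c}(Y_h,2)$, which vanish under the trace; hence only the $E_\infty^{1,1}$-representative $[w,x,y]$ of $\tilde\omega_1\cup\tilde\omega_2$ matters.

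Represent $\tilde\eta$ by $(u_\eta,\eta)\in C^0_{\st,K,P_0,c}(D^1_c(Y_h))$, where $u_\eta\in H^1_{HK,c}(Y)_K^{N=0}$ is the unique lift of $\eta$ with $P_0(\Phi)u_\eta=0$. The cup-product formula of Proposition 1.3 with $\lambda=0$, postcomposed with the $(\varphi,N,G_K)$-equivariant Poincar\'e pairing $D^1_c(Y_h)\otimes D^1(Y_h)(2)\to\Qp(1)$, produces a cocycle $(W,X,Y)\in C^1_{\st,K,P_0\star P_1\star P_2}(\Qp(1))$ with $W=b(\Phi_1,\Phi_2)(u_\eta\cup w)$, $X=u_\eta\cup x$, $Y=\eta\cup y$, where $a(t,s)P_0(t)+b(t,s)(P_1\star P_2)(s)=(P_0\star P_1\star P_2)(ts)$. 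Since $\Qp(1)$ has trivial $\Fil^0$ and $\Phi=q^{-1}$, the inverse of the convenient isomorphism (\ref{eq:convenient}) identifies this class with $Y-P(q^{-1})^{-1}W\in K$, where $P=P_0\star P_1\star P_2$.

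To simplify $W$, apply the polynomial identity: since $P_0(\Phi)u_\eta=0$, it gives $P(\Phi_1\Phi_2)(u_\eta\otimes w)=b(\Phi_1,\Phi_2)[u_\eta\otimes(P_1\star P_2)(\Phi)w]$, and the Poincar\'e pairing sends the left-hand side to $P(q^{-1})(u_\eta\cup w)$, as $\Phi_1\Phi_2$ acts as $\Phi=q^{-1}$ on $\Qp(1)$. The $\Phi$-equivariance of the pairing yields the adjunction $\alpha\cup Q(\Phi)\beta=[Q(q^{-1}\Phi^{-1})\alpha]\cup\beta$ for any polynomial $Q$; applying this converts $(P_1\star P_2)(\Phi)$ on the right factor into $(P_1\star P_2)(q^{-1}\Phi^{-1})$ acting on $u_\eta$, which commutes with $b(\Phi_1,\Phi_2)$ (both polynomials in $\Phi$ on the left factor) and factors outside. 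This gives $(P_1\star P_2)(q^{-1}\Phi^{-1})\cdot W=P(q^{-1})(u_\eta\cup w)$, whence $P(q^{-1})^{-1}W=\bigl[(P_1\star P_2)(q^{-1}\Phi^{-1})^{-1}\eta\bigr]\cup w$ (using $\iota^B_{\dR}(u_\eta)=\eta$), and substitution yields the claim. The main technical obstacle is establishing the Poincar\'e adjunction with the correct sign and inversion (the $q^{-1}\Phi^{-1}$ arises from the Tate twist on $\Qp(1)$), after which the polynomial manipulation exploiting $P_0(\Phi)u_\eta=0$ to eliminate the $a$-term is routine.
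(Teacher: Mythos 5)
Your proposal is correct and takes essentially the same approach as the paper's (very terse) proof: reduce via the filtration to a cup-product computation on the $E_2$-page, then apply the formula~\eqref{difference} for the convenient inverse. You spell out in detail the step the paper leaves implicit — namely the identity $W = \langle b(\Phi_1,\Phi_2)(u_\eta\otimes w)\rangle = P(q^{-1})\,\bigl\langle (P_1\star P_2)(q^{-1}\Phi^{-1})^{-1}u_\eta, w\bigr\rangle$ obtained by exploiting $P_0(\Phi)u_\eta=0$ together with the $\Phi$-adjunction of the Poincar\'e pairing — which is exactly the content of the paper's ``using~\eqref{difference}''; the only superficial blemish is writing ``$(P_1\star P_2)(q^{-1}\Phi^{-1})\cdot W$'' as if that operator could act on the scalar $W$, when what is meant is that $W$ equals $P(q^{-1})$ times the value of the pairing at $(P_1\star P_2)(q^{-1}\Phi^{-1})^{-1}u_\eta$.
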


  \begin{proof}
   From the definition of the cup product, we see that it respects the 3-step filtration on $P$-syntomic cohomology, and the cup-products induced on the graded pieces coincide with the usual de Rham cup products. Since $\tilde\omega_1 \cup \tilde\omega_2$ lies in $\Fil^1$, and the trace isomorphism factors through $\Fil^1 / \Fil^2$, we obtain the above compatibility using \eqref{difference}.
  \end{proof}

  This takes a particularly simple form if $\eta$ is a $\Phi$-eigenvector, say $\Phi(\eta) = \alpha \eta$; then we have
  \[ [\eta; \omega_1, \omega_2] = \eta \cup \left( y - \frac{1}{(P_1 \star P_2)(q^{-1} \alpha^{-1})} w\right).\]

 \section*{Acknowledgements}

  We would like to thank Massimo Bertolini, Henri Darmon, and Victor Rotger for encouraging us to work on this project, and for several useful remarks and conversations while the paper was being written; and Jan \Ne and Wies\l awa \Ni for their patience in explaining their work \cite{NN} to us. The second and third authors would also like to thank Ehud de Shalit for his invitation to visit Jerusalem in April 2014, which gave us the opportunity to finish the paper.

\providecommand{\bysame}{\leavevmode\hbox to3em{\hrulefill}\thinspace}

\end{document}